\theoremstyle{plain}
\newtheorem{theorem}{Theorem}[section]
\newtheorem{lemma}{Lemma}[section]
\theoremstyle{definition}
\numberwithin{equation}{section}
\begin{document}

\title{ Maximum values of the edge Mostar index in tricyclic graphs}

\author{Fazal Hayat$^{a}$,  Shou-Jun Xu$^{a,b}$\footnote{Corresponding author
 \newline E-mail addresses: fhayatmaths@gmail.com (F. Hayat), shjxu@lzu.edu.cn (S. J. Xu), zhoubo@scnu.edu.cn (B. Zhou)}, Bo Zhou$^{c}$ \\
$^a$School of Mathematics and Statistics,  Gansu Center for Applied Mathematics,\\
 Lanzhou University,  Lanzhou 730000,  P.R. China \\
$^b$School of Mathematics and Statistics,   Minnan Normal University,\\
Zhangzhou, Fujian 363000,  P.R. China\\
$^{c}$School of Mathematical Sciences,  South China Normal University, \\
 Guangzhou 510631, P.R. China}

 \date{}
\maketitle

\begin{abstract}
For a  graph $G$, the edge Mostar index of $G$ is the sum of  $|m_u(e|G)-m_v(e|G)|$ over all edges $e=uv$ of $G$,  where $m_u(e|G)$ denotes the number of edges of $G$ that have a smaller distance in $G$ to $u$ than to $v$, and analogously for $m_v(e|G)$. This paper mainly studies the problem of determining the graphs that maximize the edge Mostar index among tricyclic graphs. To be specific, we determine a sharp upper bound for the edge Mostar index on tricyclic graphs and identify the graphs that attain the bound.
   \\ \\
{\bf Keywords}: Mostar index, edge Mostar index, tricyclic graph, extremal graph.\\\\
{\bf 2010 Mathematics Subject Classification:}  05C12; 05C35; 05C38
\end{abstract}

\section{Introduction}

Let $G = (V, E)$ be a graph with vertex set $V(G)$ and edge set $E(G)$.  The order and size of $G$ are the cardinality of $V(G)$ and $E(G)$, respectively. The distance between $u$ and $v$ in $G$ is the least length of the path connecting $u$ and $v$ denoted by $d_G(u,v)$. For a vertex $x$ and edge $e = uv$ of a graph $G$, the distance between $x$ and $e$, denoted by $d_G (x, e)$ , is defined  as $d_G (x, e)= min\{ d_G(x,u), d_G(x,v)\}$.

A topological index is a real number related to graph $G$.  They are widely used for characterizing molecular graphs, establishing relationships between structure and properties of molecules, predicting biological activity of chemical compounds, and making their chemical applications.

Do\v{s}li\'c et al. \cite{DoM} introduced a bond-additive structural invariant as a quantitative refinement of the distance non-balancedness and also a measure of peripherality in graphs, named the Mostar index. For a  graph $G$, the Mostar index is defined as
\[
 Mo(G)=\sum_{e=uv \in E(G)}|n_u(e|G) - n_v(e|G)|.
\]
where $n_u(e|G)$ is the number of vertices of $G$ closer to $u$ than to $v$ and $n_v(e|G)$ is the number of vertices closer to $v$ than to $u$.

The problem of determining which graphs uniquely maximize (resp. minimize) the Mostar index in some classes of graphs has received much attention. For example,
Do\v{s}li\'c et al. \cite{DoM} studied  the Mostar index of trees and unicyclic graphs, and gave a cut method for computing the Mostar index of
benzenoid systems.  Hayat and Zhou \cite{HZ} determined all the $n$-vertex cacti with the largest Mostar index, and  obtained a sharp upper bound for the Mostar index among cacti of order $n$ with $k$ cycles, and characterized the extremal cacti. Hayat and Zhou  ~\cite{HZ1}  identified those trees with minimum and/or maximum Mostar index in the families of trees of order $n$ with fixed parameters like maximum degree, diameter and the number of pendent vertices. Hayat and Xu \cite{HX1} determined the graphs that maximize and minimize the Mostar index among trees of order $n$, and one of the fixed parameters such as, odd vertices, vertices of degree two, and pendent paths of fixed length.  Hayat and Xu \cite{HX} obtained a lower bound on the Mostar index in tricyclic graphs, and characterized the graphs that attain the bound.
Deng and Li \cite{DL} determined those trees with a given degree sequence have a maximum Mostar index.  The Mostar  index among trees with a given number of segment sequence hve been studied in  \cite{DL1}.
For more studies about the Mostar index see ~\cite{AD,DL2, DL3, HLM, LD, Te,  XZT,  XZT2}.

Arockiaraj et al. \cite{ACT}, introduced the edge Mostar index as a quantitative refinement of the distance non-balancedness, also it can measure the peripherality of every edge and consider the contributions of all edges into a global measure of peripherality for a given chemical graph.  The edge Mostar index of $G$ is  defined as
\[
 Mo_e(G)=\sum_{e=uv \in E(G)}\psi_G(uv),
\]
where $\psi_G(uv)=|m_u(e|G) - m_v(e|G)|$, and $m_u(e|G)$ denotes the number of edges of $G$ that have a smaller distance in $G$ to $u$ than to $v$, and analogously for $m_v(e|G)$.

Imran et al. \cite{IAI} studied the edge Mostar index of chemical structures and nanostructures using graph operations.
Liu et al. \cite{LSX}  determined the extremal values of the edge Mostar index among trees and unicyclic graphs and determined the maximum and the second maximum value of the edge Mostar index among cactus graphs with a given number of vertices.  Ghalavand et al. \cite{GAN} determined the minimum values of the edge Mostar index among bicyclic graphs with fixed size, and characterized the corresponding extremal graphs. The edge Mostar index for several classes of cycle-containing graphs was computed in \cite{H}. Recently, Hayat et al. \cite{HXZ} determined the sharp upper bound for the edge Mostar index on bicyclic graphs with a fixed number of edges, and the graphs that achieve the bound are completely characterized.  Ghanbari and Alikhani \cite{GA} computed the Mostar and edge Mostar index for polymers.

To have a full understanding of the relationship between the edge Mostar index and the structural properties of the graphs, in this paper, we consider
the edge Mostar index among tricyclic graphs, more precisely, we obtain the sharp upper bound for the edge Mostar index on tricyclic graphs with a fixed number of edges, and characterize all the graphs that attain the bound.

\begin{figure}
\centering
\includegraphics[height=6cm, width=14cm]{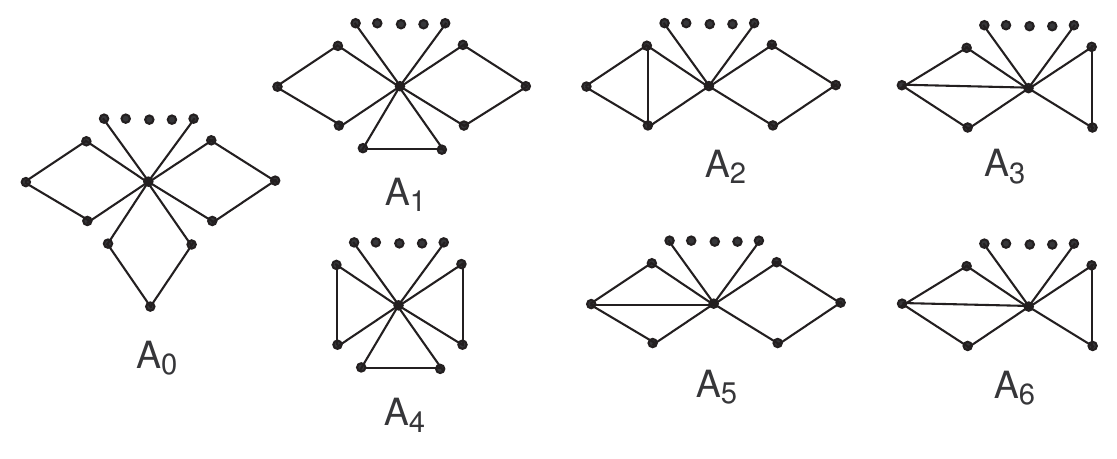}
\caption{ The Graphs $A_i$ $(i= 0,1,...,6)$ of size $m$ in Theorem \ref{a}.}
\label{a}
\end{figure}

\begin{theorem}\label{a} Let $G$ be a tricyclic graph of size $m$. Then
$$Mo_e(G) \leq \left\{\begin{array}{ll}
        12,         & \hbox{if $m=7$, and equality holds iff $G \cong F_1, H_1$}; \\
        23,         & \hbox{if $m=8$, and equality holds iff $G \cong A_3,  F_1, H_1$}; \\
        36,         & \hbox{if $m=9$, and equality holds iff $G \cong  F_1, H_1, A_i (i= 2,...,6)$}; \\
        53,         & \hbox{if $m=10$, and equality holds iff $G \cong A_2$}; \\
        72,         & \hbox{if $m=11$, and equality holds iff $G \cong A_1,  A_2$}; \\
        m^2-m-36,  & \hbox{if $m \geq 12$, and equality holds iff $G \cong A_0$}.
\end{array}\right.$$
(Where $A_i$ $(i= 0,1,...,6)$ are depicted in Fig. \ref{a}, $F_1, H_1$ are depicted and Fig. \ref{f} and Fig. \ref{h}, respectively).
\end{theorem}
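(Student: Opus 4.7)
The plan is to analyze tricyclic graphs through their \emph{tricyclic kernel}---the $2$-edge-connected core obtained by iteratively pruning pendent vertices---and to combine local transformations with sharp per-edge contribution bounds to reduce the problem to finitely many candidate graphs. Writing $G$ as a tricyclic kernel $K$ together with pendent subtrees $T_1,\ldots,T_k$ rooted at vertices of $K$, my goal is first to show that, among all tricyclic graphs sharing a given kernel and having total size $m$, the edge Mostar index is maximized when the pendent part consists of a single pendent path attached at a carefully chosen vertex of $K$; and second to compare the resulting families over the finitely many kernel types, among which $A_0,A_1,\ldots,A_6$ and the kernels of $F_1$ and $H_1$ appear.

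The first step is to record the elementary bounds on $\psi_G(uv)=|m_u(e|G)-m_v(e|G)|$. One always has $\psi_G(uv)\le m-1$, with equality only for pendent edges. For a non-pendent cut edge, $m_u(e|G)+m_v(e|G)=m-1$ and both sides carry at least one edge, so $\psi_G(uv)\le m-3$, and more precisely if deletion produces edge-components of sizes $a$ and $m-1-a$ then $\psi_G(uv)=|m-1-2a|$. For a non-cut (cycle) edge $uv$ some edges on the surrounding cycles are equidistant from $u$ and $v$, so $m_u(e|G)+m_v(e|G)<m-1$ and $\psi_G(uv)$ is correspondingly smaller. These bounds already suggest that the extremum is attained by a graph with many pendent edges arranged so that the associated cut is as unbalanced as possible.

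The second step is a pair of transformation lemmas. The \emph{pendent-tree lemma} asserts that, for a pendent subtree $T$ of size $s$ attached at a kernel vertex $v$, replacing $T$ by a pendent path $P_s$ at $v$ does not decrease $Mo_e$; a direct calculation reduces this to the fact that $\sum_{i=0}^{s-1}|m-1-2i|$ is maximized when the $s$ cut edges of the pendent part form a nested chain. The \emph{shifting lemma} asserts that moving the pendent path between two kernel vertices changes $Mo_e$ by an explicit function of the edge-imbalance seen from the respective attachment points, whose sign is controlled by the local structure of $K$. Iterating these two lemmas reduces the extremal question to choosing a kernel together with a single distinguished attachment vertex.

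The third step---and the principal obstacle---is to enumerate the possible tricyclic kernels and, for each, to evaluate $Mo_e$ of the kernel-plus-pendent-path graph as an explicit polynomial in $m$. The long pendent path contributes $\sum_{i=0}^{s-1}(m-1-2i)=s(m-s)$, which for $s$ close to $m$ gives the leading term $m^2-O(m)$; the kernel then contributes only an $O(1)$ correction, and it is precisely this $O(1)$ correction, together with the loss coming from non-cut edges on the cycles of the kernel, that distinguishes $A_0$ from the competitors $A_1,\ldots,A_6,F_1,H_1$. Matching the resulting polynomials against $m^2-m-36$ shows that for $m\ge 12$ only $A_0$ achieves the bound, while for $m\in\{7,8,9,10,11\}$ several polynomials cross and produce the additional ties listed. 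The hard part is the careful bookkeeping of these $O(1)$ corrections across each kernel and each attachment vertex, in particular pinning down the cycle-edge contributions tightly enough to show that attaching the pendent path at any ``wrong'' vertex---or choosing any kernel other than that of $A_0$---incurs a strict loss once $m$ is large. The small-$m$ ties are then read off directly from this table of polynomials.
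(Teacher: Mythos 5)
There is a fatal inversion at the heart of your reduction: your \emph{pendent-tree lemma} points the wrong way. For the edge Mostar index, a pendent edge $uv$ with leaf $v$ has $m_u(e|G)=m-1$ and $m_v(e|G)=0$, so each of $s$ pendent edges attached directly at a kernel vertex (a pendent \emph{star}) contributes $m-1$, for a total of $s(m-1)$; your pendent path instead contributes $\sum_{i=0}^{s-1}(m-1-2i)=s(m-s)$, which is strictly smaller whenever $s\ge 2$. Replacing a pendent tree by a pendent path therefore generally \emph{decreases} $Mo_e$ (it is the right move for the minimization problem, not this one); the maximizing replacement consolidates all pendent edges into a star at one vertex. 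Your own asymptotics expose the error: with $s=m-O(1)$ one gets $s(m-s)=O(m)$, not the $m^2-O(m)$ you assert, so your candidate kernel-plus-path graphs fall an entire order of magnitude short of the bound $m^2-m-36$ and could never match it. This is consistent with the actual extremal graphs: $A_0$ consists of three squares sharing a common vertex with all $m-12$ pendent edges attached at that vertex, giving $Mo_e(A_0)=(m-12)(m-1)+12(m-4)=m^2-m-36$, where the dominant $(m-1)$-terms come precisely from the pendent star --- indeed your own observation that $\psi_G(uv)\le m-1$ with equality only for pendent edges should have signalled that the extremum wants as many pendent edges as possible, not a long path.

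Beyond this, your step three also understates the enumeration: the set of tricyclic kernels is infinite (the fifteen brace topologies admit internal paths of arbitrary length), so one cannot literally tabulate polynomials over ``finitely many kernel types''; one must additionally prove that lengthening internal paths of the brace incurs a strict loss. The paper handles this in two regimes: for braces in which the graph splits as a bicyclic and a unicyclic block sharing a cut vertex ($\alpha_5,\dots,\alpha_{15}$) it invokes the known bicyclic maximum together with a lemma replacing the unicyclic block by $S_{m_2,3}$ or $S_{m_2,4}$, and for the entangled braces $\alpha_1,\dots,\alpha_4$ it combines explicit pendent-edge shifting (consolidating the star, as in Lemmas \ref{32}--\ref{38}) with counting arguments that exhibit eight to twelve edges each satisfying $\psi(e)\le m-c$ for suitable constants $c$, which beats $m^2-m-36$ whenever the brace has long internal paths. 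Your ``shifting lemma'' is in the spirit of the former and could be salvaged, but with the star/path inversion left in place none of the subsequent polynomial bookkeeping can produce the stated extremal graphs or the small-$m$ ties.
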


In section 2, we give some definitions and preliminary results.  Theorem \ref{a} is proved in section 3.

\section{Preliminaries}

In this section, some basic notations and  elementary results are listed, which will be useful in the proof of main results.

 \begin{figure}
\centering
\includegraphics[height=8cm, width=14cm]{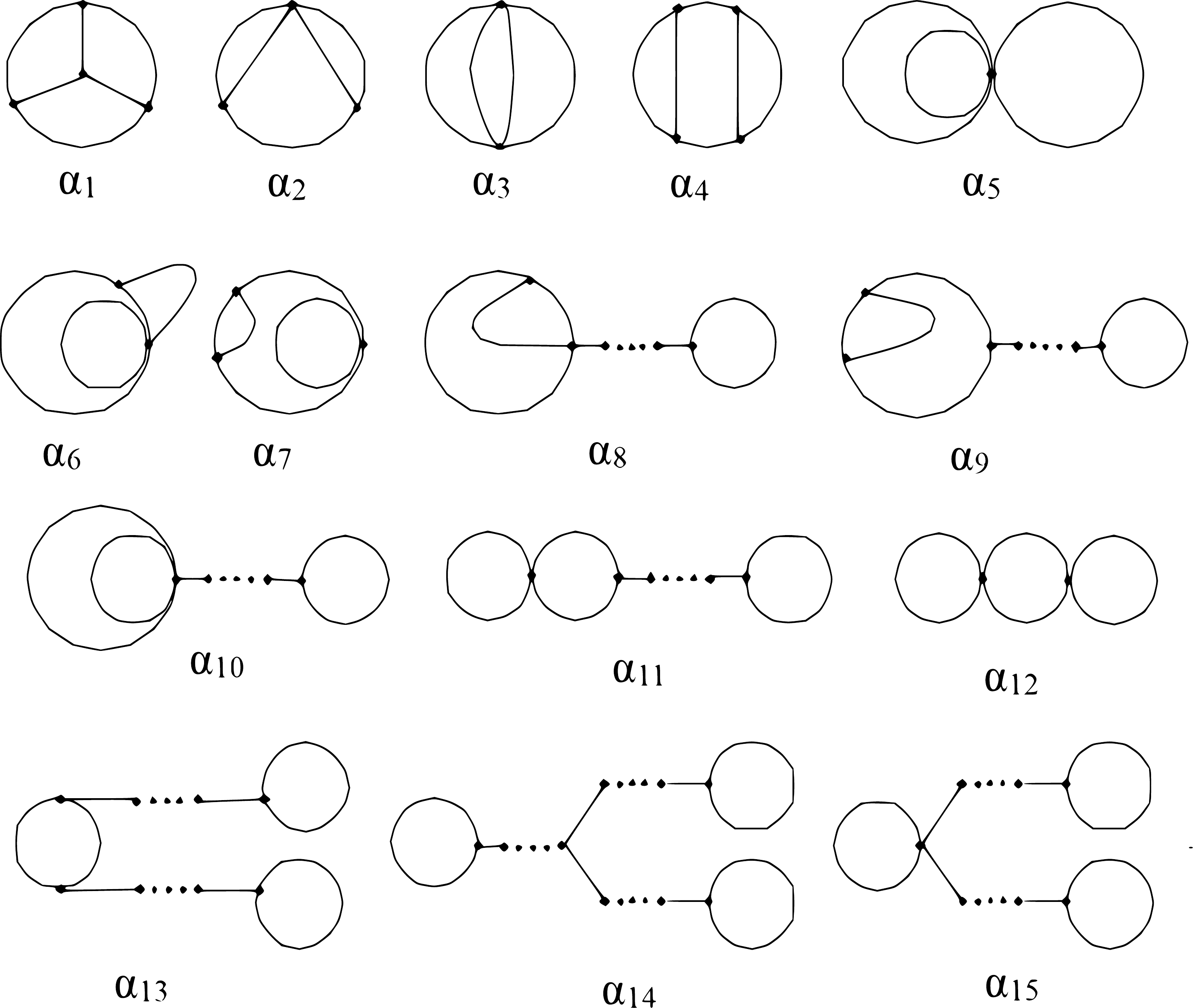}
\caption{ The braces in $ \mathcal{G}_m$.}
\label{alp1}
\end{figure}

For  $v \in V(G)$, let $N_G(v)$  be the set of vertices that are adjacent to $v$ in $G$. The degree of $v \in V(G)$ , denoted by $d_G(v)$,  is the cardinality of  $N_G(v)$. A vertex with degree one is called a pendent vertex and an edge incident to a pendent vertex is called a pendent edge. A graph $G$ with $n$ vertices is a tricyclic graph if $|E(G)|=n+2$. As usual, by $S_n$, $P_n$ and $C_n$ we denote the star, path and cycle on $n$ vertices, respectively.

 Let $G_1 \cdot G_2$ be the graph obtained from $G_1$ and $G_2$ by identifying one vertex, say $u$ of the two graphs.  If $G_1$ contains a cycle and $u$ belongs to some cycle, and $G_2$ is a tree, then we call $G_2$ a pendent tree in $G_1 \cdot G_2$  associated with $u$.  For each $e \in E(G_1)$, every path from $e$ to some edges of $G_2$ passes through $u$. Therefore, the contribution of $G_2$ to  $\sum_{e\in E(G_1)}\psi(e)$ totally depends on the size of $G_2$,  that is, changing the structure of $G_2$ cannot alter the value $\sum_{e\in E(G_1)}\psi(e)$.
 If a graph $H$ is gotten by removing repeatedly all pendents (If any) of $G$. Then we say $H$ is the brace of $G$. That is to say, $H$ does not contain any pendent vertex. Obviously, for all connected tricyclic graphs, their braces are shown in Fig. \ref{alp1}. Let $ \mathcal{G}_m^i$ be the collection whose element includes $\alpha_i$ as its brace for $i=1, \dots , 15$. For convenience, let $ \mathcal{A} = \cup_{i=5}^{15} \mathcal{G}_m^i$.

In the following, Hayat et al. \cite{HXZ} determined a sharp upper bound for the edge Mostar index on bicyclic graphs with a fixed number of edges

\begin{theorem}\label{1} Let $G$ be a bicyclic graph of size $m$. Then
$$Mo_e(G) \leq \left\{\begin{array}{ll}
        4,         & \hbox{if $m=5$, and equality holds iff $G \cong B_3, B_4$}; \\
        m^2-3m-6,  & \hbox{if $6 \leq m \leq 8$, and equality holds iff $G \cong B_1, B_3$};  \\
        48,        & \hbox{if $m=9$, and equality holds iff $G \cong B_0, B_1, B_2, B_3, B_4$};  \\
        m^2-m-24,  & \hbox{if $m \geq 10$, and equality holds iff $G \cong B_0$}.
\end{array}\right.$$
(Where $B_0, B_1, B_2, B_3, B_4$ are depicted in Fig. \ref{b}).
\end{theorem}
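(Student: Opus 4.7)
My plan is to classify all bicyclic graphs by their brace---the subgraph left after iteratively pruning all pendent vertices---and optimize within each brace type. For a bicyclic graph the brace $H$ is of one of three types: the $\infty$-graph $\infty(p,q)$ (two cycles $C_p, C_q$ sharing exactly one vertex), the $\theta$-graph $\theta(p,q,r)$ (three internally disjoint paths of lengths $p,q,r$ joining two common endpoints), or the dumbbell $D(p,q,s)$ (two disjoint cycles $C_p, C_q$ joined by a path of length $s \geq 1$). Using the observation recalled in the Preliminaries---that attaching a pendent tree of fixed size at a fixed brace vertex leaves $\sum_{e \in E(H)} \psi(e)$ unchanged---the internal structure of pendent trees may be optimized independently of the brace.

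Within each brace class the argument will proceed in two reduction steps. First, I would prove that among all pendent trees $T$ of size $s$ rooted at a brace vertex $v$, the star $K_{1,s}$ centered at $v$ maximizes $\sum_{e \in E(T)} \psi_G(e)$. The reason is that every pendant edge $vu$ with $u$ a leaf satisfies $\psi_G(vu) = m - 1$---the largest possible value---since the argument used in the Preliminaries (comparing $d(v,f)$ and $d(u,f)$ for each other edge $f$) shows $u$ is strictly farther than $v$ from every other edge. Any internal edge in a non-star pendent tree loses at least one unit of $\psi$ for every edge that is separated from its outer endpoint by more than one step. Second, I would establish a "concentration" inequality: among all ways to distribute $\ell = m - |E(H)|$ pendant leaves over the brace vertices, $Mo_e$ is maximized by concentrating them at a single well-chosen brace vertex $v^\ast$ (or, in small highly symmetric cases, at a small symmetric set of brace vertices).

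After these two reductions the problem becomes a finite comparison over pairs (brace $H$, attachment vertex $v^\ast$). For each such pair I would compute $Mo_e$ as an explicit polynomial in $\ell$ by writing $\psi(e)$ in closed form for every edge of $H$---carefully tracking which edges are equidistant from the endpoints of $e$ due to cycle symmetries of $H$---and adding the leaf contribution $\ell(m-1)$. Comparing the finitely many polynomials obtained this way determines, for each $m$, which configuration wins: for $m \geq 10$ a single configuration $B_0$ should emerge as the unique maximum, giving $Mo_e = m^2 - m - 24$; for $m \in \{5,\ldots,9\}$ several of the polynomials coincide at the maximum, producing the tabulated families $B_1, B_2, B_3, B_4$ and the small-$m$ exceptional values $4, m^2-3m-6, 48$.

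The principal obstacle is the concentration step, because moving a leaf from one brace vertex to another simultaneously increases $\psi(e)$ on brace edges where the two vertices lie on the same side and decreases $\psi(e)$ on brace edges where they lie on opposite sides; controlling the net effect demands a careful distance-based accounting on each brace type. A related subtlety is the appearance of equidistant edges from even cycles inside $\theta$- and $\infty$-braces: these create zero-contribution "base" edges whose presence shifts the optimal attachment point as $m$ grows, which is exactly what produces the stratified list of extremal graphs for $m = 5, 6, \ldots, 9$ before $B_0$ dominates.
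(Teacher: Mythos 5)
Note first that this paper does not actually prove Theorem \ref{1}: it is quoted from \cite{HXZ}, and the closest in-paper model for how such a proof goes is the tricyclic argument of Section 3 (pendant-shifting lemmas plus edge-selection counting bounds). Your skeleton --- classify bicyclic braces as $\infty(p,q)$, $\theta(p,q,r)$, or dumbbell; replace each pendent tree by a star at its root; concentrate leaves; compare explicit values --- is exactly the approach of that line of work, and your star reduction is sound: every pendant edge $vu$ has $\psi(vu)=m-1$ since all other edges are strictly closer to $v$, while the contribution of a pendent tree to the $\psi$-values of edges outside it depends only on its size, as the Preliminaries observe.

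The decisive gap is your claim that the problem ``becomes a finite comparison over pairs (brace $H$, attachment vertex $v^\ast$).'' It does not: the brace parameters $p,q,r$ and the dumbbell path length are unbounded, so even after the star and concentration reductions you face an infinite family of configurations as $m$ grows. For instance, among $\infty(p,q)$ with all leaves at the common vertex you must still show that $(p,q)=(4,4)$ beats every larger cycle pair for $m\geq 10$ (this is what makes $Mo_e(B_0)=m^2-m-24$), that $(3,3)$ wins for $6\leq m\leq 8$, and that the two families tie at $m=9$; analogous unbounded comparisons arise for $\theta$-braces and dumbbells. The standard repair, visible in this paper's Lemmas \ref{41}--\ref{44}, is a counting bound: when the brace contains a path of length at least two or three, select a bounded set of edges near the branch vertices, each satisfying $\psi(e)\leq m-c$ for suitable constants $c$, bound all remaining edges by $m-1$, and conclude $Mo_e(G)<m^2-m-24$ outright. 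Only this lemma reduces the brace list to a bounded set of small braces, after which your polynomial comparison genuinely is finite. Without it, your plan does not close.

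A second, lesser gap: the concentration step is asserted rather than proved, and it is not a single clean inequality. As the analogous shifting computations here show (e.g.\ the hypothesis $a_1 < a_2+3a_3+a_5-3$ in Lemma \ref{33}, or the $m$-dependent crossovers $D_1/D_2$, $F_1/F_2$, $H_1/H_2$ in Lemmas \ref{32}, \ref{33}, \ref{36}), the sign of the change under moving leaves depends on the current distribution, equidistant edges contribute nothing, and the optimal attachment vertex itself varies with $m$. So this step must be executed as case-by-case shifting inequalities per brace, with equality cases tracked carefully to recover the exact extremal lists at $m=5$ and $m=9$ and the coincidence $B_1,B_3$ for $6\leq m\leq 8$; your proposal correctly flags the difficulty but supplies no mechanism for it.
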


Let $S_{m,r} \cong S_{m-r} \cdot C_r$, where the common vertex of $S_{m-r}$ and $C_r$ is the center  of $S_{m-r}$.

\begin{lemma}\cite{HXZ}\label{21} Let $G_1$ be a connected graph of size $m_1$  and  $G_2$ be a unicyclic graph of size $m_2$. Then

$ Mo_e(G_1 \cdot G_2 ) \leq Mo_e(G_1 \cdot S_{m_2, 3} )$ for $m_1 + m_2 \leq 8$;

$  Mo_e(G_1 \cdot G_2 ) \leq  Mo_e(G_1 \cdot S_{m_2, 3} )= Mo_e(G_1 \cdot S_{m_2, 4} )$ for $m_1 + m_2 = 9$;

$ Mo_e(G_1 \cdot G_2 ) \leq  Mo_e(G_1 \cdot S_{m_2, 4} )$ for $m_1 + m_2 \geq 10$;

where the fusing vertex of $G_1 \cdot S_{m_2, 3}$ (resp. $G_1 \cdot S_{m_2, 4}$) is the center of $ S_{m_2, 3}$ (resp. $S_{m_2, 4}$).
\end{lemma}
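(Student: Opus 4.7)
The plan is to stratify all tricyclic graphs by their brace (the pendant-free core), which by the discussion in Section~2 must be one of the fifteen graphs $\alpha_1,\ldots,\alpha_{15}$ in Fig.~\ref{alp1}. For each brace class $\mathcal{G}_m^i$ I compute (or sharply upper-bound) $\max Mo_e$, and then take the maximum over $i=1,\ldots,15$ to obtain the piecewise bound stated in Theorem~\ref{a}. The key reduction is Lemma~\ref{21}: every pendant tree hanging off a cycle contributes to $Mo_e$ only through its size (as already noted in the paragraph defining $G_1\cdot G_2$), so by iterated application of Lemma~\ref{21}, among all graphs in a given brace class the maximum is attained when each attached pendant unicyclic lump is replaced by $S_{m',3}$ for small sizes or $S_{m',4}$ for large sizes. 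This collapses each class to a one- or two-parameter family for which $Mo_e$ can be written in closed form.

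For the classes $\mathcal{A}=\bigcup_{i=5}^{15}\mathcal{G}_m^i$, whose braces contain a bicyclic subgraph, I would exploit Theorem~\ref{1}: a graph $G\in\mathcal{A}$ can be viewed as a bicyclic graph with an additional pendant cycle (or vice versa). Combining the bicyclic bound $Mo_e\le m^2-m-24$ (for large $m$) with the extra contribution from the third cycle, and applying Lemma~\ref{21} to canonicalize the pendant structure, I expect to show that no member of $\mathcal{A}$ beats $A_0$ once $m$ is moderately large, and for small $m$ the explicit extremal candidates in $\mathcal{A}$ turn out to be $A_2,\ldots,A_6$ (and ties with $F_1,H_1$ from classes $\alpha_1,\ldots,\alpha_4$).

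For the "pure" tricyclic braces $\alpha_1,\ldots,\alpha_4$ (e.g., $\theta$-graphs and close relatives), the extremal configurations obtained after the Lemma~\ref{21} reduction are precisely $A_0,A_1$ (a long cycle with a big star hanging off one vertex and two additional short cycles attached) and the graphs $F_1,H_1$ from Fig.~\ref{f}, Fig.~\ref{h}. For each such candidate I would compute $\psi_G(e)$ edge-by-edge by partitioning the edges into: (i) edges on the long cycle, for which $|m_u-m_v|$ grows linearly in $m$ and contributes the dominant $m^2-m$ term, (ii) edges of the short attached cycles, which contribute $O(1)$, and (iii) the pendant star edges, each contributing $m-O(1)$. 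Summing yields the closed-form value $m^2-m-36$ for $A_0$ and the constants $12,23,36,53,72$ for the other candidates at the transitional sizes $m=7,\ldots,11$.

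The main obstacle, as usual in such exhaustive extremal problems, is the comparison phase: for each small $m\in\{7,8,9,10,11\}$ I must verify that no graph in any of the fifteen brace classes, after the Lemma~\ref{21} canonicalization, exceeds the listed value, and also pin down exactly which graphs achieve equality (the uniqueness / tie list). This requires careful book-keeping of the edges along which $m_u(e\mid G)=m_v(e\mid G)$ (which contribute $0$ and hence allow ties), and a case split according to whether $m$ is below, at, or above the crossover $m=12$ where the dominant family switches to $A_0$. Once the transitional values are checked by direct computation and the asymptotic inequality $Mo_e(G)\le m^2-m-36$ is verified for $G\ncong A_0$ when $m\ge 12$ (via the reductions above combined with Theorem~\ref{1}), the piecewise statement follows.
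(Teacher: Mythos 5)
Your proposal does not prove the statement at hand. The statement is Lemma~\ref{21}, a local canonicalization result: for a unicyclic graph $G_2$ of size $m_2$ fused at one vertex to an arbitrary connected graph $G_1$ of size $m_1$, the edge Mostar index is maximized (over all choices of $G_2$) by $S_{m_2,3}$ when $m_1+m_2\leq 8$, by $S_{m_2,4}$ when $m_1+m_2\geq 10$, with a tie at $m_1+m_2=9$. What you wrote instead is a plan for the main result, Theorem~\ref{a}: stratification by brace, use of Theorem~\ref{1}, comparison of the candidates $A_0,\ldots,A_6,F_1,H_1$ at the transitional sizes. Worse, your plan explicitly \emph{invokes} Lemma~\ref{21} as ``the key reduction,'' so as a proof of Lemma~\ref{21} it is circular: no step of the proposal establishes the inequality $Mo_e(G_1\cdot G_2)\leq Mo_e(G_1\cdot S_{m_2,3})$ or the crossover between girth $3$ and girth $4$. (In the paper this lemma is not reproved at all; it is quoted from the bicyclic-graphs paper \cite{HXZ}, where it is established independently of any tricyclic considerations.)

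A genuine proof of the lemma would have to run roughly as follows, and none of it appears in your proposal. First, by the pendant-tree observation in Section~2 (every path from an edge of $G_1$ to an edge of $G_2$ passes through the fusing vertex), the quantity $\sum_{e\in E(G_1)}\psi(e)$ depends only on $m_2$, not on the structure of $G_2$; so one only needs to maximize $\sum_{e\in E(G_2)}\psi_{G_1\cdot G_2}(e)$. Second, one shows by edge-shifting arguments that the optimal $G_2$ is a cycle through the fusing vertex with all remaining $m_2-g$ edges pendant at that vertex: each pendant edge there contributes $m_1+m_2-1$, which is the maximum possible for any edge, while moving pendant edges closer to the fusing vertex and shortening branches only increases the total. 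Third, a direct computation comparing the cycle lengths $g=3$, $g=4$, and $g\geq 5$ yields the stated crossover: with $m=m_1+m_2$, the three edges of a fused triangle contribute a total depending on $m$ in a way that beats the four edges of a fused quadrilateral exactly when $m\leq 8$, ties at $m=9$, and loses for $m\geq 10$, with longer cycles strictly worse throughout. Since your text supplies neither the reduction to $E(G_2)$, nor the optimization over unicyclic attachments, nor the $C_3$ versus $C_4$ comparison that produces the case split at $m_1+m_2=9$, the proposal has a complete gap with respect to the stated lemma.
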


By means of Theorem \ref{1} and the above result, the following conclusions are obtained.

\begin{lemma}\label{22} Let $G=G_1 \cdot G_2$ be a tricyclic graph, where $G_1$ is a bicyclic graph of size $m_1$ and $G_2$ is a unicyclic graph of size $m_2$. Then

$ Mo_e(G) \leq Mo_e(B_3 \cdot S_{m_2, 3} )$ for  $m_1 + m_2= 8$;

$ Mo_e(G) \leq Mo_e(B_2 \cdot S_{m_2, 3} )= Mo_e(B_3 \cdot S_{m_2, 3} ) = Mo_e(B_3 \cdot S_{m_2, 4})= Mo_e(B_4 \cdot S_{m_2, 3} ) = Mo_e(B_4 \cdot S_{m_2, 4} )$ for  $m_1 + m_2= 9$;

$ Mo_e(G) \leq Mo_e(B_0 \cdot S_{m_2, 4} )$ for  $m_1 + m_2 \geq 12$;

where the fusing vertex of any of the above two graphs is the center of $ S_{m_2, 3}$ or $S_{m_2, 4}$.

\end{lemma}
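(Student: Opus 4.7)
The plan is to perform two successive reductions, first on the unicyclic factor $G_2$ and then on the bicyclic factor $G_1$. Applying Lemma~\ref{21} with the bicyclic $G_1$ as the base and $G_2$ as the unicyclic partner replaces $G_2$, without decreasing $Mo_e$, by $S_{m_2,3}$ when $m_1+m_2\le 8$, by either of $S_{m_2,3}$ and $S_{m_2,4}$ (with equality) when $m_1+m_2=9$, and by $S_{m_2,4}$ when $m_1+m_2\ge 10$. It remains to maximize $Mo_e(G_1\cdot S_{m_2,r})$ over bicyclic $G_1$ of size $m_1$ and fusing vertex $w\in V(G_1)$.

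For this second step, I write $Mo_e(G)=\Sigma_1+\Sigma_2$ with $\Sigma_i=\sum_{e\in E(G_i)}\psi_G(e)$. Every edge of $G_1$ is separated from every edge of $S_{m_2,r}$ by the fusing vertex $w$, so $\Sigma_2$ depends only on $m_1$ and on the fixed shape of $S_{m_2,r}$. For $e=uv\in E(G_1)$, the $m_2$ edges of $S_{m_2,r}$ lie entirely on one side of $e$, so $\psi_G(e)=\psi_{G_1}(e)+m_2$ when $w$ is on the majority side of $e$ in $G_1$, and $\psi_G(e)=|\psi_{G_1}(e)-m_2|$ when $w$ is on the minority side. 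Hence $\Sigma_1$ is large precisely when $Mo_e(G_1)$ is large and $w$ lies on the majority side of most edges. Theorem~\ref{1} then pushes $G_1$ toward $\{B_0,B_1,B_2,B_3,B_4\}$, and direct inspection of these five graphs identifies the best placement of $w$ (typically a vertex of maximum degree in $G_1$).

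Having reduced to pairs of the form $(B_j,S_{m_2,r})$, I enumerate and compute $Mo_e(B_j\cdot S_{m_2,r})$ in each regime. For $m_1+m_2=8$ only $(m_1,m_2)=(5,3)$ is feasible and $B_3\cdot S_{m_2,3}$ comes out as the unique maximum. For $m_1+m_2=9$ the admissible pairs $(5,4)$ and $(6,3)$ give the five candidates $B_2\cdot S_{m_2,3}$, $B_3\cdot S_{m_2,3}$, $B_3\cdot S_{m_2,4}$, $B_4\cdot S_{m_2,3}$, and $B_4\cdot S_{m_2,4}$, all of which are checked to attain the same value. For $m_1+m_2\ge 12$, expressing $Mo_e(B_j\cdot S_{m_2,r})$ as a polynomial in $m_2$ and comparing leading behavior (in line with Theorem~\ref{1}) shows that $B_0\cdot S_{m_2,4}$ strictly dominates every other candidate.

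The main obstacle is the claim underlying step two: that for any bicyclic $G_1$ outside $\{B_0,\dots,B_4\}$, $Mo_e(G_1\cdot S_{m_2,r})$ is strictly less than for the five extremals. This is not an immediate consequence of Theorem~\ref{1}, because the correction term in $\Sigma_1$ depends on the position of the fusing vertex $w\in V(G_1)$, and a non-extremal $G_1$ could in principle admit a more favorable $w$. Resolving this requires a local transformation argument for the bicyclic factor analogous to Lemma~\ref{21}, with the roles of the two factors interchanged.
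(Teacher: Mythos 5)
Your route is the same as the paper's: the paper in fact offers no proof of Lemma \ref{22} beyond the single sentence that it is obtained ``by means of Theorem \ref{1} and the above result'', which is precisely your two-step reduction. So the obstacle you flag at the end is a genuine gap --- but it is a gap in the paper as much as in your proposal, and as written your argument is incomplete: everything in the lemma beyond the first application of Lemma \ref{21} is the second reduction, which you announce (``Theorem \ref{1} then pushes $G_1$ toward $\{B_0,\dots,B_4\}$, and direct inspection identifies the best placement of $w$'') but never carry out. Moreover your diagnosis of where the difficulty sits is partly misplaced. From your own case formula one gets, in every case, $\psi_G(e)\le \psi_{G_1}(e)+m_2$ for each $e\in E(G_1)$ and \emph{every} fusing vertex $w$ (you also omitted a third case: when $w$ is equidistant from the two endpoints of $e$, the $m_2$ edges of the attached star-cycle count on neither side and $\psi_G(e)=\psi_{G_1}(e)$). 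Hence $\Sigma_1\le Mo_e(G_1)+m_1m_2$ uniformly in $w$, so the ``more favorable $w$ in a non-extremal $G_1$'' that you fear cannot exceed the ceiling $Mo_e(B_i)+m_1m_2+\Sigma_2$; that worry is vacuous. Since in $B_0$ (two $4$-cycles with all pendant edges at the common vertex) the common vertex is strictly closer to the majority endpoint of every edge, $B_0\cdot S_{m_2,4}\cong A_0$ attains this ceiling exactly, and this already settles the regime $m_1+m_2\ge 12$ whenever $m_1\ge 10$, with no extra transformation lemma needed.

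The \emph{actual} residual difficulty, which neither you nor the paper addresses, is the range $5\le m_1\le 9$ inside the regime $m\ge 12$: there Theorem \ref{1} gives an extremal bicyclic value larger than $m_1^2-m_1-24$, and the ceiling $Mo_e(G_1)+m_1m_2+\Sigma_2$ then \emph{overshoots} the target $m^2-m-36$. Concretely, for $m_1=5$ the only bicyclic graph is $G_1\cong K_4-e$ with $Mo_e(G_1)=4$, and the ceiling exceeds $m^2-m-36$ by $8$; to conclude one must exploit the deficit coming from equidistant and minority placements, e.g.\ fusing at a degree-$3$ vertex of $K_4-e$ leaves two edges equidistant from $w$ (deficit $2m_2$), and fusing at a degree-$2$ vertex gives deficit $m_2+4$, so every placement loses at least $m_2+4>8$ once $m_2\ge 5$, which holds since $m\ge 12$ forces $m_2\ge 7$. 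Analogous position-dependent checks are needed for $m_1=6,\dots,9$, and they are exactly the kind of local analysis you defer to a hypothetical ``Lemma \ref{21} with roles interchanged''. (By contrast, the cases $m=8$ and $m=9$ involve only $(m_1,m_2)\in\{(5,3),(5,4),(6,3)\}$ and hence finitely many graphs, so your enumeration there is legitimate.) In short: same skeleton as the paper, correctly flagged incompleteness, but the missing step is mislocated --- it is not the optimization over $w$ for arbitrary $G_1$, which the uniform per-edge bound dispatches, but the small-$m_1$ cases where the candidate ceiling itself is too weak and a deficit count is indispensable.
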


 \begin{figure}
\centering
\includegraphics[height=2.7cm, width=14cm]{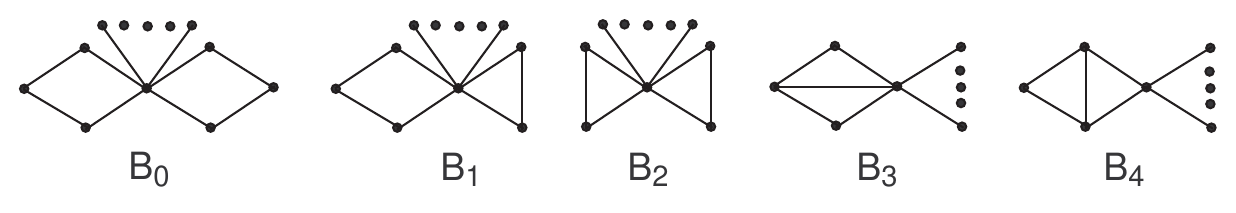}
\caption{The Graphs $B_i$ $(i= 0,1,...,4)$ of size $m$ in Theorem \ref{1}.}
\label{b}
\end{figure}

\section{Proof of Theorem \ref{a}}

For the proof of Theorem \ref{a}, we first develop several lemmas.

First, we obtain  a sharp upper bound for $Mo_e(G)$ on $ \mathcal{A} = \cup_{i=5}^{15} \mathcal{G}_m^i$.

\begin{lemma}\label{31} Let $G \in \mathcal{A}$ of size $m$. Then
$$Mo_e(G) \leq \left\{\begin{array}{ll}
        23,         & \hbox{if $m=8$, and equality holds iff $G \cong A_3$}; \\
        36,         & \hbox{if $m=9$, and equality holds iff $G \cong   A_i (i= 2,...,6)$}; \\
        53,         & \hbox{if $m=10$, and equality holds iff $G \cong A_2$}; \\
        72,         & \hbox{if $m=11$, and equality holds iff $G \cong A_1,  A_2$}; \\
        m^2-m-36,  & \hbox{if $m \geq 12$, and equality holds iff $G \cong A_0$};
\end{array}\right.$$
\end{lemma}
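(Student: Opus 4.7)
The plan is to exploit the decomposability of every $G\in\mathcal{A}$ as a fusion $G=G_1\cdot G_2$ at a cut vertex, combined with Lemma \ref{22} and Theorem \ref{1}. The first step is structural: inspect the eleven braces $\alpha_5,\dots,\alpha_{15}$ in Fig.~\ref{alp1} and verify that each has at least one cut vertex whose removal separates it into a bicyclic piece and a cycle. Any $G\in\mathcal{A}$ inherits such a cut vertex $u$, so $G=G_1\cdot G_2$ where $G_1$ is bicyclic of size $m_1$, $G_2$ is unicyclic of size $m_2$, and $m_1+m_2=m$ (pendant trees are distributed between $G_1$ and $G_2$ according to which side of $u$ they hang on). This is the only place the hypothesis $G\in\mathcal{A}$ is used.

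Having reduced to this canonical form, the second step applies Lemma \ref{22} directly for the sizes it covers: $m=8$ gives $Mo_e(G)\le Mo_e(B_3\cdot S_{m_2,3})$; $m=9$ gives a common value over the listed graphs; and $m\ge 12$ gives $Mo_e(G)\le Mo_e(B_0\cdot S_{m_2,4})$. For the intermediate sizes $m=10,11$, which are not explicitly handled by Lemma \ref{22}, I would first use Lemma \ref{21} on $G_2$ to replace it by $S_{m_2,3}$ or $S_{m_2,4}$, and then invoke Theorem \ref{1} on $G_1$ to reduce to a short list of bicyclic candidates $B_0,\dots,B_4$. In every case the outcome is a fusion $B_i\cdot S_{m_2,r}$ with the fusion vertex at the center of the star part, for a handful of admissible triples $(i,m_2,r)$.

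The third step is the explicit computation of $Mo_e(B_i\cdot S_{m_2,r})$. Because every such graph is highly symmetric, each edge falls into one of a small number of classes (pendant edges of the star, cycle edges of $S_{m_2,r}$, cycle edges of $B_i$, the bridge, and the non-cycle edges of $B_i$), and $\psi(uv)$ can be read off almost immediately for each class. Summing the contributions and optimizing over the partition $m=m_1+m_2$ yields, for $m\ge 12$, a quadratic in $m_2$ whose maximum over admissible values equals $m^2-m-36$ and is attained uniquely at the partition that produces $A_0$; for $m\in\{8,9,10,11\}$ the finitely many candidates are evaluated directly and matched against $A_1,\dots,A_6$.

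I expect the main obstacle to be Step 3 in the borderline sizes $m=9,10,11$: there Lemma \ref{22} either gives equality among several canonical graphs (so uniqueness of the extremum must be ruled out by separate verification among $A_2,\dots,A_6$), or does not apply at all, so one must combine Lemmas \ref{21} and \ref{1} carefully and check each resulting candidate graph against the claimed extremal list. The $m\ge 12$ case, by contrast, is conceptually clean once $Mo_e(B_0\cdot S_{m_2,4})$ is expanded: the quadratic $m^2-m-36$ emerges from a single dominating partition, and the other admissible partitions can be dismissed by a direct monotonicity argument in $m_2$.
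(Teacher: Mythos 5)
Your proposal follows essentially the same route as the paper's proof: decompose $G \in \mathcal{A}$ as $G_1 \cdot G_2$ with $G_1$ bicyclic and $G_2$ unicyclic, reduce via Lemmas \ref{21} and \ref{22} to the fused candidates $B_i \cdot S_{m_2,3}$ or $B_i \cdot S_{m_2,4}$ (which are exactly the graphs $A_0,\dots,A_6$), compute their closed-form values, and settle the borderline sizes by comparison. The only minor difference is in how $m=10,11$ are handled: the paper does not re-run the reduction there but simply compares the precomputed formulas $Mo_e(A_0)=m^2-m-36$, $Mo_e(A_1)=Mo_e(A_2)=m^2-2m-27$, etc., together with the edge-count feasibility constraint that $A_0$ requires $m\geq 12$, which is a shortcut for the Lemma \ref{21} plus Theorem \ref{1} argument you outline.
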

\begin{proof}
Suppose $G \in \mathcal{A}$, then $G$ contains $\alpha_i (i=5,6,...,15)$ as its brace.  Let $G_1$ be a bicyclic graph of size $m_1$  and  $G_2$ be a unicyclic graph of size $m_2$ such that $G = G_1 \cdot G_2 $. Then, in view of Lemmas \ref{21} and \ref{22}, if $m= 8$,  we get
 \begin{eqnarray*}
 Mo_e(G ) & = & Mo_e(G_1 \cdot G_2 ) \leq  Mo_e(G_1 \cdot S_{m_2, 3} ) \\
          &\leq & Mo_e(B_3 \cdot S_{m_2, 3} )= Mo_e(A_3);
\end{eqnarray*}
if $m= 9$,  we get
 \begin{eqnarray*}
 Mo_e(G ) & = & Mo_e(G_1 \cdot G_2 )  \leq Mo_e(B_2 \cdot S_{m_2, 3} )= Mo_e(B_3 \cdot S_{m_2, 3} ) \\
          & = & Mo_e(B_3 \cdot S_{m_2, 4})= Mo_e(B_4 \cdot S_{m_2, 3} ) = Mo_e(B_4 \cdot S_{m_2, 4} )\\
          & = & Mo_e(A_i) (i= 2,...,6);
\end{eqnarray*}
if $m \geq 12$, we have
\begin{eqnarray*}
 Mo_e(G ) & = & Mo_e(G_1 \cdot G_2 ) \leq  Mo_e(G_1 \cdot S_{m_2, 4} )\\
          &\leq & Mo_e(B_0 \cdot S_{m_2, 4} )= Mo_e(A_0).
\end{eqnarray*}
By simple calculation, it is easy to check that,
$Mo_e(A_0)= m^2-m-36$,
$Mo_e(A_1)= Mo_e(A_2)=m^2-2m-27$,
$Mo_e(A_3)= Mo_e(A_4)=m^2-4m-9$,
$Mo_e(A_5)= Mo_e(A_6)=Mo_e(A_7)=m^2-3m-18$.

Clearly,  $Mo_e(A_0)= m^2-m-36 >  Mo_e(A_i) (i=3,...,7)$, for $m \geq 10$, but  $A_0$ contains at least 12 edges. Therefore, if $m=11$, then $Mo_e(A_1)= Mo_e(A_2) >  Mo_e(A_i) (i=3,...,7)$; if $m=10$, then $ Mo_e(A_2) >  A_i (i=3,...,7)$.
\end{proof}

 \begin{figure}
\centering
\includegraphics[height=6cm, width=14cm]{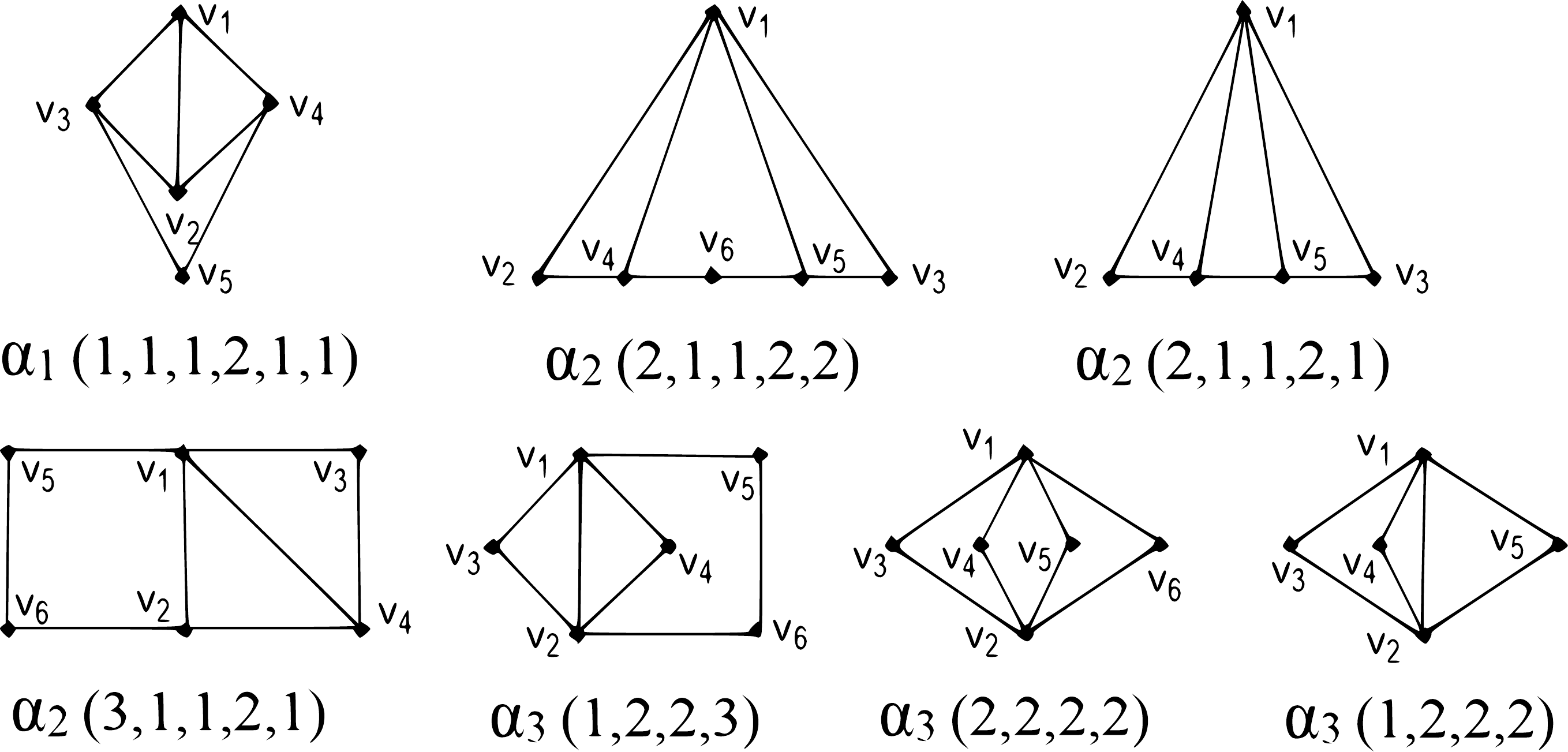}
\caption{ The Graphs for Lemmas \ref{32}, \ref{33}, \ref{34}, \ref{35}, \ref{36}, \ref{37} and \ref{38}.}
\label{alp2}
\end{figure}

In what follows, we determine  a sharp upper bound for $Mo_e(G)$ on $ \cup_{i=1}^{4} \mathcal{G}_m^i$.

For $i=1,...,4$,  $\alpha_i (a_1, a_2, \dots)$  represents the number of edges in each portion of the brace $\alpha_i$.
\begin{lemma}\label{32} Let $G \in \mathcal{G}_m^1$ with brace $\alpha_1 (1,1,1,2,1,1)$. Then
$$Mo_e(G) < \left\{\begin{array}{ll}
      Mo_e(D_1) = m^2-3m-24,  & \hbox{if $7 \leq m \leq 10$};  \\
       Mo_e(D_1) = Mo_e(D_2) = 64,        & \hbox{if $ m=11$};  \\
      Mo_e(D_2) =  m^2-2m-35,  & \hbox{if $m \geq 12$}.
\end{array}\right.$$
\end{lemma}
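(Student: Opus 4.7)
The plan is to exploit the brace-plus-pendant-trees decomposition to reduce the lemma to a finite optimization over how the $m-7$ non-brace edges are distributed on the brace $\alpha_1(1,1,1,2,1,1)$. First, I would invoke the invariance principle stated in the preliminaries: if a pendant tree $T$ is attached at a vertex $u$ lying on the cyclic part, then $\sum_{e\in E(G\setminus T)}\psi(e)$ depends only on $|E(T)|$, and the contribution of the edges of $T$ itself is easily seen to be maximized (for fixed size) when $T$ is a star. Thus, up to equality of $Mo_e$, every $G\in\mathcal{G}_m^1$ with brace $\alpha_1(1,1,1,2,1,1)$ can be taken to be obtained from $\alpha_1(1,1,1,2,1,1)$ by attaching stars $K_{1,n_i}$ at the (six) brace vertices $v_1,\dots,v_6$, with $n_1+\cdots+n_6=m-7$.

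Next I would compute $\psi_G(e)$ for every edge in a brace copy $\alpha_1(1,1,1,2,1,1)$ plus stars. There are two types of edges: (i) the seven brace edges, whose values $m_u(e|G)-m_v(e|G)$ can be written as explicit affine functions of the $n_i$'s by going around the brace and identifying for each such edge the cut in $G$ it induces (the brace's small size makes this enumeration entirely mechanical); (ii) the $m-7$ pendant edges, each of which contributes exactly $m-1$ since one endpoint is a leaf. Summing, one obtains
\[
Mo_e(G)=(m-7)(m-1)+\sum_{e\in E(\alpha_1(1,1,1,2,1,1))}\bigl|f_e(n_1,\dots,n_6)\bigr|,
\]
where each $f_e$ is an affine form in the $n_i$'s whose coefficients are determined once and for all by the brace geometry. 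Standard convexity/triangle-inequality arguments (the sum of absolute values of affine forms on a simplex is maximized at a vertex) then reduce the maximization to checking the six configurations in which all pendants are concentrated at a single brace vertex.

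For each of these six single-vertex configurations, I would write down $Mo_e(G)$ as a quadratic in $m$ and compare it with $Mo_e(D_1)=m^2-3m-24$ and $Mo_e(D_2)=m^2-2m-35$. The crossover $m=11$ in the statement corresponds exactly to $D_1$ and $D_2$ tying at $64$, so the comparison splits into the ranges $7\le m\le 10$, $m=11$, and $m\ge 12$. In each range, every one of the six candidate quadratics must be shown to be strictly dominated by the appropriate $Mo_e(D_j)$; since the leading coefficient of each candidate is $1$ (matching $D_1,D_2$), the inequality reduces to inequalities on the linear and constant terms, which hold strictly for $m$ in the claimed range. A small finite check at the boundary values $m=7,\dots,11$ finishes the cases where linear dominance is not yet sharp.

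The main obstacle is the bookkeeping in step (ii): correctly computing $m_u(e|G)-m_v(e|G)$ for each of the seven brace edges of $\alpha_1(1,1,1,2,1,1)$ in terms of $n_1,\dots,n_6$. Because $\alpha_1$ is not of the form bicyclic$\,\cdot\,$unicyclic, Lemmas \ref{21} and \ref{22} cannot be invoked directly, so one genuinely has to analyze which edges of $G$ lie strictly closer to $u$ than to $v$ for each brace edge $uv$, using the geometry of the three cycles meeting at the length-$2$ segment. The length-$2$ portion is where the hardest asymmetries appear, since for the two edges in that segment the closest-edge counts involve nontrivial sums of the $n_i$'s from both sides of the brace; the rest of the verification is then routine.
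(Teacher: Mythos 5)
Your proposal is correct in substance but takes a genuinely different route from the paper. The paper fixes the pendant-edge counts $a_i$ at the brace vertices and runs a chain of explicit pendant-shifting transformations $G \to G_1 \to G_2 \to G_3$, expanding all the absolute values at each step under tacit sign hypotheses (e.g.\ $a_1+a_3 \geq a_2+a_4 \geq 1$, $a_5>0$) to show each shift strictly increases $Mo_e$, terminating in $D_1$ and $D_2$; you replace these local moves by one global observation: after the star reduction, $m_u(e|G)-m_v(e|G)$ for each of the seven brace edges is an affine form in the $n_i$ with coefficients in $\{-1,0,+1\}$ fixed by the brace geometry (a pendant bundle at $v_i$ falls on the $u$-side, the $v$-side, or is equidistant according only to $d(v_i,u)$ versus $d(v_i,v)$), each pendant edge contributes exactly $m-1$, so $Mo_e$ is a convex function on the simplex $\sum_i n_i = m-7$ and its maximum is attained at an extreme point, i.e.\ with all pendants concentrated at a single brace vertex. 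This buys real robustness: the paper's expansions are valid only under the unstated sign conditions (read literally, its last step's gain of $5a_1>0$ would force $Mo_e(D_2)>Mo_e(D_1)$ for every $m$, contradicting the regime $7\leq m\leq 10$ where $Mo_e(D_1)-Mo_e(D_2)=11-m>0$), whereas the extreme-point reduction needs no case analysis of signs. Two corrections to your write-up: the brace $\alpha_1(1,1,1,2,1,1)$ has five vertices, not six (seven edges; $K_4$ with one edge subdivided once), and its automorphism group collapses them into three orbits, so the final comparison involves three candidate quadratics rather than six; and since the lemma asserts a strict inequality, convexity alone gives only that the maximum is attained at a simplex vertex, so you must add a word ruling out ties at configurations with pendants split over two or more vertices --- for instance by checking that the convex function is not constant along the simplex edges incident to the maximizing vertex --- although the paper's own use of ``$<$'' is equally loose, since $D_1$ and $D_2$ themselves belong to the class $\mathcal{G}_m^1$ under consideration.
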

\begin{proof}
Suppose that $v_i$ $(i=1,...,5) $ be the vertices in $\alpha_1$ of $G$, as shown in Fig. \ref{alp2}.  Let $a_i$ be the number of pendent edges of $v_i$ $(i=1,...,5) $. Suppose that $a_1+a_3 \geq a_2 + a_4 \geq 1$ . Let $G_1$ be the graph obtained from $G$ by shifting $a_2$ (resp. $a_4$) pendent edges from $v_2$ (resp. $v_4$) to $v_1$ (resp. $v_3$). We deduce that
\begin{eqnarray*}
 && Mo_e(G_1 )-  Mo_e(G )=\\
 &&(a_1+a_2-a_3-a_4-a_5)-(a_1+a_4-a_3-a_5)+ (a_3+a_4+a_5+2-3)\\
 &-& (a_2+a_4+3-a_3-a_5-2)+ (a_3+a_4+a_5+2-3)-(a_2+a_4+3-a_3-a_5-2)\\
 & + & (a_1+a_2+a_3+a_4-a_5)-(a_1+a_3-a_4-a_5) + (a_3+a_4+3-a_5-2)\\
 &-&   (a_2+a_3+3-a_4-a_5-2)+ (a_3+a_4+3-a_5-2)-(a_2+a_3+3-a_4-a_5-2)\\
 & + & (a_1+a_2)-(a_1-a_2)+(a_1+a_2+a_3+a_4+3-a_5-1)- (a_1+a_2+a_3+3-a_4-a_5-1)\\
 & + & (a_1+a_2+3-a_3-a_4-a_5-1)-(a_1+a_2+a_4+3-a_3-a_5-1)\\
& = & 2( a_2+a_3+a_4 + a_5 )-2 > 0.
\end{eqnarray*}
For $a_5 >0$, let $G_2$ be the graph obtained from $G_1$ by shifting $a_5$ pendent edges from $v_5$ to $v_3$. We obtain
\begin{eqnarray*}
&& Mo_e(G_2 )-  Mo_e(G_1 )=\\
&& (a_1+a_3+a_5)-(a_1+a_3-a_5)+(a_3+a_5+3-2)-(a_3+3-a_5-2)\\
& + & (a_1+a_3+a_5+3-1)- (a_1+a_3+3-a_5-1)\\
 & = & 6 a_5 > 0.
\end{eqnarray*}
Let $G_3$ be the graph obtained from $G_2$ by shifting $a_1$ pendent edges from $v_1$ to $v_3$. We obtain
\begin{eqnarray*}
 &  & Mo_e(G_3 )-  Mo_e(G_2 )  = \\
 &  &  (a_1+a_3)-(a_3-a_1)+(a_1+a_3+2-3)-(a_3+2-3)+(a_1+a_3+3-2)-(a_3+3-2)\\
 &+ & 0-a_1+(a_1+a_3+1-3)-(a_3+1-a_1-3)\\
 & = & 5 a_1 >0.
\end{eqnarray*}
 Clearly, $G_3 \cong D_2$, and  $G_2 \cong D_1$ for $a_3=0$. Observe that  $Mo_e(D_1 )=m^2-3m-24$, and $Mo_e(D_2 )=m^2-2m-35$ .
\end{proof}

 \begin{figure}
\centering
\includegraphics[height=4cm, width=7cm]{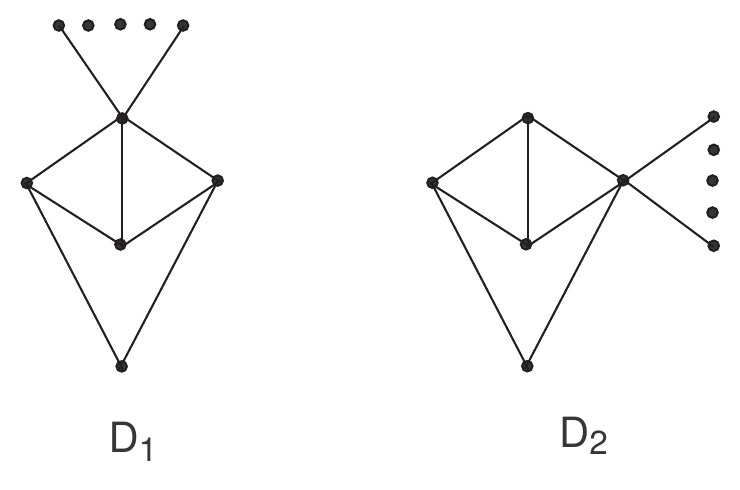}
\caption{ The Graphs $D_1, D_2$ of size $m$ in Lemma \ref{32}.}
\label{d}
\end{figure}

\begin{lemma}\label{41} Let $G \in \mathcal{G}_m^1$ of size $m$. Then
$Mo_e(G) < m^2-m-36$.
\end{lemma}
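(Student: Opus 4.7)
The plan is to reduce the general case $G\in\mathcal{G}_m^1$ to the subfamily already handled in Lemma~\ref{32} (namely, graphs with brace $\alpha_1(1,1,1,2,1,1)$), and then finish via a direct arithmetic comparison between $\max\{Mo_e(D_1), Mo_e(D_2)\}$ and $m^2-m-36$.

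First I would normalize the pendent structure. Using the observation from Section~2 that the contribution of a pendent tree $G_2$ to $\sum_{e\in E(G_1)}\psi(e)$ depends only on the size of $G_2$, I may replace every pendent tree hanging at a vertex of the brace by a bunch of pendent edges at that same vertex, without changing $Mo_e(G)$. This reduces the problem to graphs whose only non-brace structure consists of pendent edges attached at the vertices $v_1,\ldots,v_5$ of $\alpha_1$.

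Next, suppose the brace of $G$ is $\alpha_1(a_1,\ldots,a_6)$ with some $a_i$ differing from the target pattern $(1,1,1,2,1,1)$. I would define an internal-edge contraction that shortens one oversized portion by a single edge and re-attaches the freed edge as a pendent at a carefully chosen brace vertex. As in the proof of Lemma~\ref{32}, one writes, for every edge $e$ of the new graph, the change $|m_u(e|G')-m_v(e|G')|-|m_u(e|G)-m_v(e|G)|$, and verifies by case analysis that the total sum is nonnegative. Iterating this step terminates in finitely many moves at a graph $G'$ whose brace is $\alpha_1(1,1,1,2,1,1)$ (with any ``shorter'' brace subcases, e.g.\ $\alpha_1(1,1,1,1,1,1)$, handled by the same template with one fewer edge to track). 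By Lemma~\ref{32}, we obtain $Mo_e(G)\le Mo_e(G')\le\max\{Mo_e(D_1), Mo_e(D_2)\}$.

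Finally, I would observe the arithmetic: $Mo_e(D_1)=m^2-3m-24<m^2-m-36$ iff $m>6$, and $Mo_e(D_2)=m^2-2m-35<m^2-m-36$ iff $m>1$, both of which hold in the entire range $m\ge 7$ where $\mathcal{G}_m^1$ is nonempty. The main obstacle is establishing monotonicity of the contraction step in full generality: as in Lemma~\ref{32}, it requires bookkeeping of $m_u(e|G)$ and $m_v(e|G)$ for every affected edge, including those lying on the brace itself whose closer-endpoint count shifts when a path is shortened, but the computation is entirely analogous in flavor to the chain of edge-shift calculations already performed there, and is routine once the correct choice of receiving brace vertex is specified.
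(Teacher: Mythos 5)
Your endgame arithmetic is fine ($m^2-3m-24<m^2-m-36$ for $m>6$ and $m^2-2m-35<m^2-m-36$ for $m>1$), but the central step of your plan --- that one can always contract an internal edge of an oversized portion of the brace, re-attach it as a pendent edge at a well-chosen vertex, and never decrease $Mo_e$ --- is the entire content of the lemma in your scheme, and you only assert that its verification is ``routine.'' It is not, and it is likely false in the generality you need: such a move changes a cycle length and hence its parity, which can reverse the sign of the change. The paper's own Lemma~\ref{21} exhibits exactly this phenomenon: contracting a cycle edge of $S_{m_2,4}$ into a pendent edge produces $S_{m_2,3}$, and for $m_1+m_2\ge 10$ this move does \emph{not} increase the index --- the quadrilateral beats the triangle. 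For the same reason it is not even clear that the maximum over $\mathcal{G}_m^1$ is attained at the brace $\alpha_1(1,1,1,2,1,1)$ rather than at, say, $\alpha_1(2,1,1,2,1,1)$ or $\alpha_1(1,1,1,3,1,1)$ for suitable $m$; your iteration would have to prove a separate inequality at each such transition, with case-dependent receiving vertices and parities, and none of this is carried out. Two further deferred computations: replacing a pendent tree by pendent edges at the same vertex is not index-\emph{preserving} as you claim --- the Section~2 observation fixes only the contributions of the brace edges, while the tree-internal contributions change (they increase to the maximal value $m-1$ per edge, so the normalization is salvageable, but it needs saying); and the all-ones brace $\alpha_1(1,1,1,1,1,1)$ cannot arise as the endpoint of your contraction process, so disposing of it ``by the same template'' is yet another unexecuted case.

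The paper avoids the monotonicity problem altogether. For every brace configuration with long portions it selects eight or nine specific edges $e$ near the branch vertices, bounds each by $\psi(e)\le m-c$ with explicit constants (obtained by exhibiting several edges that are not counted on the larger side), bounds every remaining edge trivially by $\psi(e)\le m-1$, and checks that the resulting quadratic lies strictly below $m^2-m-36$; the shifting arguments of the kind you propose are confined to Lemma~\ref{32}, i.e.\ to the single fixed small brace $\alpha_1(1,1,1,2,1,1)$, which is invoked only in the residual subcase $a_4=2$. If you wish to keep your reduction-based plan, you must actually prove the contraction inequality for every configuration of $(a_1,\dots,a_6)$ and every parity --- a substantially larger case analysis than the paper's --- or replace the reduction by direct edge-selection bounds of the paper's type.
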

\begin{proof}
Suppose that $G \in \mathcal{G}_m^1$, then $G$ has a brace $\alpha_1 (a_1, a_2, a_3,  a_4, a_5, a_6)$ as shown in Fig. \ref{alp3}. We consider the following three possible cases.

\noindent {\bf Case 1.}  $\alpha_1$ have at least three paths with length at least two.

\noindent {\bf Subcase 1.1.} The three paths inclose a cycle.

Assume that the three paths are $P(a_1), P(a_2)$ and $P(a_6)$ by the symmetry of $\alpha_1$. We choose  nine edges, two edges in the path $P(a_1)$  such that each one is incident to $x$ or $u$, two edges in the path $P(a_2)$  such that each one is incident to $y$ or $u$, two edges in the path $P(a_6)$  such that each one is incident to $y$ or $z$, one edge in the path $P(a_3)$  incident to $z$, one edge in the path $P(a_4)$  incident to $z$ and one edge in the path $P(a_5)$  incident to $z$. Let $e$ be one of the nine edges. Then $\psi(e) \leq m-7$. This fact is also true for the remaining eight edges. Thus,
\[
Mo_e(G) \leq 9(m-7)+(m-9)(m-1) < m^2-m-36.
\]

\noindent {\bf Subcase 1.2.} The three paths composed a new path.

Assume that the three paths are $P(a_1), P(a_2)$ and $P(a_4)$ by the symmetry of $\alpha_1$. We choose nine edges, two edges in the path $P(a_1)$  such that each one is incident to $x$ or $u$, two edges in the path $P(a_2)$  such that each one is incident to $y$ or $u$, two edges in the path $P(a_4)$  such that each one is incident to $y$ or $z$, one edge in the path $P(a_3)$  incident to $z$, one edge in the path $P(a_5)$  incident to $z$ and one edge in the path $P(a_6)$  incident to $x$.  Thus,
\[
Mo_e(G) \leq 2(m-6)+4(m-7)+2(m-8)+(m-9)+(m-9)(m-1) < m^2-m-36.
\]

\noindent {\bf Subcase 1.3.} The three paths share a common vertex.

Assume that the three paths are $P(a_1), P(a_2)$ and $P(a_3)$ by the symmetry of $\alpha_1$. We choose nine edges, two edges in the path $P(a_1)$  such that each one is incident to $x$ or $u$, two edges in the path $P(a_2)$  such that each one is incident to $y$ or $u$, two edges in the path $P(a_3)$  such that each one is incident to $u$ or $z$, one edge in the path $P(a_4)$  incident to $y$, one edge in the path $P(a_5)$  incident to $z$ and one edge in the path $P(a_6)$  incident to $x$.  We have,
\[
Mo_e(G) \leq 3(m-7)+3(m-8)+3(m-9)+(m-9)(m-1) < m^2-m-36.
\]

\noindent {\bf Case 2.}  $\alpha_1$ have just two paths with length at least two.

\noindent {\bf Subcase 2.1.} The two paths belong to the same cycle at $\alpha_1$.

Assume that the two paths are $P(a_1) $ and $P(a_2)$  by the symmetry of $\alpha_1$. We choose eight edges, two edges in the path $P(a_1)$  such that each one is incident to $x$ or $u$, two edges in the path $P(a_2)$  such that each one is incident to $y$ or $u$, one edge in the path $P(a_3)$  incident to $u$, one edge in the path $P(a_4)$  incident to $y$, one edge in the path $P(a_5)$  incident to $x$ and one edge in the path $P(a_6)$  incident to $x$.  We deduce that,
\[
Mo_e(G) \leq 4(m-6)+3(m-7)+(m-8)+(m-8)(m-1) < m^2-m-36.
\]

\noindent {\bf Subcase 2.2.} The two paths belong to the two different cycles at $\alpha_1$.

We choose eight edges in a similar way, as  in Subcase 2.1. We obtain
\[
Mo_e(G) \leq 4(m-5)+4(m-8)+(m-8)(m-1) < m^2-m-36.
\]

\noindent {\bf Case 3.}  $\alpha_1$ has exactly one path with length at least two.

Assume that the path is $P(a_4) $ with $a_4 \geq 2$. If $a_4=2$, then by Lemma \ref{32}, $Mo_e(G) < m^2-m-36$. If $a_4 \geq 3$, then similarly choose eight edges as  in Subcase 2.1. We obtain
\[
Mo_e(G) \leq 2(m-5)+6(m-8)+(m-8)(m-1) < m^2-m-36.
\]
\end{proof}

\begin{lemma}\label{33} Let $G \in \mathcal{G}_m^2$ with brace $\alpha_2 (2,1,1,2,1)$. Then
$$Mo_e(G) < \left\{\begin{array}{ll}
       Mo_e(F_1) = m^2-4m-9,  & \hbox{if $7 \leq m \leq 16$};  \\
       Mo_e(F_1) = Mo_e(F_2) = 212,        & \hbox{if $ m=17$};  \\
       Mo_e(F_2) = m^2-3m-26,  & \hbox{if $m \geq 18$}.
\end{array}\right.$$
\end{lemma}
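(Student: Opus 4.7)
The plan is to mirror the approach used in Lemma \ref{32}: label the vertices on the brace $\alpha_2$, denote by $a_i$ the number of pendent edges attached at each labeled vertex, and then apply a sequence of edge-shifting operations, each of which strictly increases $Mo_e$, until all pendent edges have been concentrated onto one or two carefully chosen vertices. Since $\alpha_2(2,1,1,2,1)$ encodes the edge-counts along the five portions of the brace $\alpha_2$ (see Fig.~\ref{alp2}), there will be a fixed set of five vertices $v_1,\dots,v_5$ on the brace that may carry pendent edges, with $|E(G)|-7 = \sum_{i=1}^5 a_i$ pendent edges distributed among them.

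First, I would exploit the symmetries of $\alpha_2(2,1,1,2,1)$ to reduce the number of cases: by relabeling, assume (say) $a_1+a_2 \ge a_3+a_4$ and $a_5$ is the attachment count at the ``interior'' vertex. Then I would introduce a transformation $G \to G_1$ that shifts the $a_j$ pendents from the less loaded cycle-vertices onto their more loaded counterparts, and compute $Mo_e(G_1)-Mo_e(G)$ by summing, edge by edge, the changes in $|m_u(e|G)-m_v(e|G)|$. As in Lemma \ref{32}, most contributions cancel in pairs, leaving a positive linear expression in the $a_i$'s. A second transformation $G_1 \to G_2$ would then empty the interior vertex $v_5$ by shifting its pendents onto one of the cycle vertices, and a final transformation $G_2 \to G_3$ would consolidate all remaining pendents onto the single most favorable vertex.

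The resulting extremal configurations will be precisely $F_1$ and $F_2$ (depicted in Fig.~\ref{f}): one in which all pendents are hung at a ``degree-$3$ cycle vertex'' and one in which they are hung at the other candidate vertex. A direct calculation then yields $Mo_e(F_1)=m^2-4m-9$ and $Mo_e(F_2)=m^2-3m-26$. Setting these equal gives the crossover $m=17$, which matches the three-case split in the statement; strict inequality $Mo_e(G)<\max\{Mo_e(F_1),Mo_e(F_2)\}$ follows because the shifting operations are strict whenever the corresponding $a_i>0$, and the base cases where all shifts are trivial already produce $F_1$ or $F_2$.

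The main obstacle I anticipate is bookkeeping in the edge-by-edge computation of $Mo_e(G_{i+1})-Mo_e(G_i)$. Unlike the bicyclic case, $\alpha_2$ has three cycles sharing vertices, so for each of the seven brace edges and for the endpoint-pendent edges one must carefully identify which side of the edge an arbitrary edge of $G$ is closer to; this depends on how the $a_i$ pendents and the two paths of length two are positioned. I would handle this by, for each brace edge $e=uv$, listing the six or seven ``classes'' of edges of $G$ (pendents at each $v_i$, plus brace edges) and tabulating their contribution to $m_u(e|G)-m_v(e|G)$ as a linear function of the $a_i$'s before and after the shift — just as was done in Lemma \ref{32}. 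Beyond the calculations, a subtlety is that the absolute values $|m_u(e|G)-m_v(e|G)|$ can change sign for certain brace edges during a shift; this will require a short case analysis to confirm that the computed differences remain valid, or to handle the sign-change edges separately.
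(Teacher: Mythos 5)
Your proposal follows essentially the same route as the paper's proof: label the five attachment vertices of $\alpha_2(2,1,1,2,1)$, perform a sequence of pendant-shifting transformations whose edge-by-edge increments $Mo_e(G_{i+1})-Mo_e(G_i)$ are positive linear forms in the $a_i$ (the paper's shifts carry exactly the conditional hypotheses, e.g.\ $a_1<a_2+3a_3+a_5-3$ and $a_1>6-2a_2$, that you anticipate as the sign-change case analysis), terminating in the two candidates $F_1$ and $F_2$, and then compare $m^2-4m-9$ with $m^2-3m-26$ to locate the crossover at $m=17$. This matches the paper's argument in structure and in the final computation, so the plan is sound and not a genuinely different approach.
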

\begin{proof}
Suppose that $v_i$ $(i=1,...,5) $ be the vertices in $\alpha_2 (2,1,1,2,1)$ of $G$, as shown in Fig. \ref{alp2}.  Let $a_i$ be the number of pendent edges of $v_i$ $(i=1,...,5) $. Suppose  $a_2+a_4 \geq a_3 + a_5 \geq 1 $. For $a_1 < a_2 +3a_3 + a_5 -3$, let $G_1$ be the graph obtained from $G$ by shifting $a_3$ (resp. $a_5$) pendent edges from $v_3$ (resp. $v_5$) to $v_2$ (resp. $v_4$). We deduce that
\begin{eqnarray*}
& & Mo_e(G_1 )-  Mo_e(G ) = \\
 & & (a_2+a_3+1-a_1-4)-(a_1+a_3+a_5+4-a_2-1)+(1+a_2+a_3-3-a_4-a_5)\\
 & -& (a_2+1-a_4-a_5-3)+(a_1+3-a_4-a_5-2)-(a_1+a_3+3-a_4-2)\\
 &+ & (a_2+a_3+a_4+a_5+3-3)-(a_2+a_4+3-a_5-a_3-3)+(a_1+a_2+a_3+a_4+a_5+4-1)\\
 & -& (a_1+a_2+a_4+4-a_3-1)+(a_4+a_5+3-1)-(a_4+a_5+3-1-a_3)\\
 &+ & (a_1+a_2+a_3+3-2)-(a_1+a_2+a_3-a_5-2)\\
 & = & 2 a_2+6a_3+2a_5 -2a_1-6 > 0.
\end{eqnarray*}
Let $G_2$ be the graph obtained from $G_1$ by shifting $a_4 $ pendent edges from $v_4$ to $v_1$. We obtain
\begin{eqnarray*}
 & & Mo_e(G_2 )-  Mo_e(G_1 )= \\
 & & (a_1+a_4+4-a_2-1)-(a_1+4-a_2-1)+ (a_1+a_4+3-2)-(a_1+3-a_4-2)\\
 &+ & (a_1+a_2+a_4+3-2)-(a_1+a_2+3-2)+(a_2+1-3)-(a_2+1-3-a_4)+(a_2+3-3)\\
 &- & (a_2+a_4+3-3)+(3-1)-(a_4+3-1)\\
 & = & 3 a_4 > 0.
\end{eqnarray*}
 Let $G_3$ be the graph obtained from $G_2$ by shifting $a_2 $ pendent edges from $v_2$ to $v_1$. We obtain
\begin{eqnarray*}
  &  & Mo_e(G_3 )-  Mo_e(G_2 )= \\
 &  &  (a_1+a_2+4-1)-(a_1+4-a_2-1)+(a_1+a_2+3-2)-(a_1+3-2)+(1-3)\\
  & - & (a_2+1-3)+0-(a_2+3-3)\\
& = & 2 a_2 > 0.
\end{eqnarray*}
For $a_1 >6-2a_2$, let $G_4$ be the graph obtained from $G_3$ by shifting $a_1 $ pendent edges from $v_1$ to $v_2$. We have
\begin{eqnarray*}
 & & Mo_e(G_3 )-  Mo_e(G_2 )= \\
  & &(a_1+a_2+1-4)-(a_1+4-a_2-1)+(3-2)-(a_1+3-2)+(a_1+a_2+1-3)\\
                       &\qquad - & (a_2+1-3)+(a_1+a_2+3-3)-(a_2+3-3)\\
                         & = &a_1+ 2 a_2-6 > 0.
\end{eqnarray*}
Clearly, $G_3 \cong F_1$ and  $G_4 \cong F_2$. By simple calculation, we have  $Mo_e(F_1 )=m^2-4m-9$, and $Mo_e(F_2 )=m^2-3m-26$.
\end{proof}

 \begin{figure}
\centering
\includegraphics[height=4cm, width=13cm]{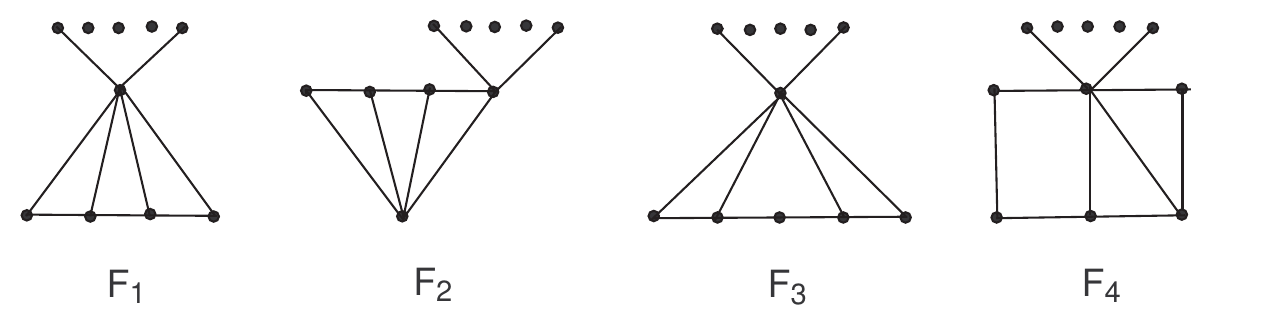}
\caption{ The Graphs $F_1, F_2, F_3, F_4$ of size $m$ in Lemmas \ref{33}, \ref{34} and \ref{35}.}
\label{f}
\end{figure}

\begin{lemma}\label{34} Let $G \in \mathcal{G}_m^2$ with brace $\alpha_2 (2,1,1,2,2)$. Then
$Mo_e(G) < Mo_e(F_3)= m^2-3m-20$.
\end{lemma}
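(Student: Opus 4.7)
The plan is to adapt the shifting compression argument of Lemma \ref{33}. First, by repeatedly applying Lemma \ref{21} to the maximal pendant trees of $G$, we may assume each pendant tree is a star, so that all pendants hang directly from the branching vertices of the brace. Label those branching vertices of $\alpha_2(2,1,1,2,2)$ as $v_1,\ldots,v_k$ as indicated in Figure \ref{alp2}, and set $a_i$ to be the number of pendant edges at $v_i$. The quantity $Mo_e(G)$ then decomposes as a pendant contribution depending only on $\sum_i a_i$ plus a sum over the $2+1+1+2+2 = 8$ brace edges of terms $|m_x(e|G) - m_y(e|G)|$, each of which depends only on how the $a_i$'s are distributed between the two shores of $G-e$.

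Next I would impose a canonical ordering on the $a_i$'s using the symmetries of the brace, and then introduce a chain of pendant-shift operations $G \to G_1 \to G_2 \to \cdots$ that progressively concentrate pendants onto the branching vertex which carries them in $F_3$ (Figure \ref{f}). For each shift $G_k \to G_{k+1}$ I compute $Mo_e(G_{k+1}) - Mo_e(G_k)$ edge by edge on the eight brace edges; the differences should collapse to non-negative linear combinations of the $a_i$'s, strictly positive unless the shift moves a zero pendant count. The chain terminates at $F_3$, which yields $Mo_e(G) \le Mo_e(F_3)$ with the strictness claimed in the lemma following from the fact that every intermediate step strictly increases $Mo_e$ whenever the input graph is not already canonical.

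The main obstacle is the bookkeeping. Because $\alpha_2(2,1,1,2,2)$ now has two length-two paths (compared to one in Lemma \ref{33}), the sign of $m_x(e|G) - m_y(e|G)$ on each interior edge of these paths depends on a threshold comparison among the $a_i$'s, analogous to the split $a_1 < a_2 + 3a_3 + a_5 - 3$ appearing in the proof of Lemma \ref{33}. Handling this cleanly requires auxiliary boundary shifts to cover both sides of each threshold, after which I would organize the brace edges into groups of similar geometric role, tabulate the before/after contributions, and sum. Finally, $Mo_e(F_3) = m^2 - 3m - 20$ is verified by a direct count on the explicit graph in Figure \ref{f}, completing the proof.
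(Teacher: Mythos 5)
Your plan matches the paper's proof in essentials: the paper likewise assumes all pendants are attached directly at the six brace vertices of $\alpha_2(2,1,1,2,2)$ and runs a chain of pendant-shift operations (shift $a_6$ from $v_6$ to $v_1$; then $a_3$ from $v_3$ to $v_2$ and $a_5$ from $v_5$ to $v_4$; then $a_2, a_4$ onto $v_1$), verifying edge-by-edge over the eight brace edges that each step strictly increases $Mo_e$ --- with exactly the kind of threshold guard you anticipate (the middle shift requires $a_2+a_3>a_1$) --- terminating at $G_3\cong F_3$ and computing $Mo_e(F_3)=m^2-3m-20$ directly. One small correction to your setup: the reduction of pendant trees to stars does not come from Lemma \ref{21} (which replaces a \emph{unicyclic} attached part by $S_{m_2,3}$ or $S_{m_2,4}$), but from the Section 2 observation that a pendant tree's contribution to the brace-edge terms depends only on its size, combined with the fact that pendant edges attain the maximal possible value $\psi(e)=m-1$.
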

\begin{proof}
Suppose that $v_i$ $(i=1,...,6) $ be the vertices in $\alpha_2 (2,1,1,2,2)$ of $G$, as shown in Fig. \ref{alp2}.  Let $a_i$ be the number of pendent edges of $v_i$ ($i=1,...,6$). For $a_6 >0$, let $G_1$ be the graph obtained from $G$ by shifting $a_6$  pendent edges from $v_6$  to $v_1$. We obtain
\begin{eqnarray*}
 & & Mo_e(G_1 )-  Mo_e(G )=\\
  &  & (a_1+a_3+ a_5+ a_6+5-a_2-1)-(a_1+a_3+a_5+5-a_2-1)\\
 & +& (a_1+a_2+a_4+a_6+5-a_3-1)-(a_1+a_2+a_4+5-a_3-3)\\
 & +& (a_1+a_3+a_5+a_6+4-a_4-2)-(a_1+a_3+a_5+4-a_4-a_6-2)\\
  &+ & (a_1+a_2+a_4+a_6+4-a_5-2)-(a_1+a_2+a_4+4-a_5-a_6-2)\\
 & + & (a_1+a_2+a_4+a_6+4-a_5-2)-(a_1+a_2+a_4+4-a_5-a_6-2)\\
 & + & (a_1+a_3+a_5+4-a_4-2)-(a_1+a_3+a_5+4-a_4-a_6-2)\\
 & + & (a_2+1-a_4-3)-(a_2+1-a_4-a_6-3)+(a_3+1-a_5-3)-(a_3+1-a_5-a_6-3)\\
                       & = & 11a_6 > 0.
\end{eqnarray*}
For $a_2+a_3>a_1$, let $G_2$ be the graph obtained from $G_1$ by shifting $a_3$ (resp. $a_5$) pendent edges from $v_3$ (resp. $v_5$) to $v_2$ (resp. $v_4$). We deduce that
\begin{eqnarray*}
 &  & Mo_e(G_2 )-  Mo_e(G_1 )=\\
  &  & (a_2+a_3+1-a_1-5)-(a_1+a_3+a_5+5-a_2-1)+(a_1+a_2+a_3+a_4+a_5+5-1)\\
  & - & (a_1+a_2+a_4+5-a_3-1)+(a_1+4-a_4-a_5-2)-(a_1+a_3+a_5+4-a_4-2)\\
&+ & (a_1+a_2+a_3+a_4+a_5+4-2)-(a_1+a_2+a_4+4-a_5-2)+(a_2+a_3+1-a_4-a_5-3)\\
&- & (a_2+1-a_4-3)+(3-1)-(a_3+1-a_5-3)+(a_1+a_2+a_3+a_4+a_5-2)\\
& - & (a_1+a_2+a_4+4-a_5-2)+(a_1+4-a_4-a_5-2)-(a_1+a_3+a_5+4-a_4-2)\\
                       & = & 2a_2+2a_3-2a_1 > 0.
\end{eqnarray*}
For $a_2+a_4\geq 1$, let $G_3$ be the graph obtained from $G_2$ by shifting $a_2$ (resp. $a_4$) pendent edges from $v_2$ (resp. $v_4$) to $v_1$ (resp. $v_4$). We have
\begin{eqnarray*}
&  &  Mo_e(G_3 )-  Mo_e(G_2 ) =\\
&  & (a_1+a_2+a_4+5-1)-(a_1+5-a_2-1)+(a_1+a_2+a_4+5-1)-(a_1+5-a_2-1)\\
&+ & (a_1+a_2+a_4+4-2)-(a_1+a_3+a_5+4-a_4-2)+(1-3)-(a_2+1-a_4-3)\\
                       & + & (a_1+a_2+a_4+4-2)-(a_1+4-a_4-2)\\
                       & = & 5a_2+7a_4 > 0.
\end{eqnarray*}
Clearly, $G_3 \cong F_3$, and   $Mo_e(F_3 )=m^2-3m-20$.
\end{proof}

\begin{lemma}\label{35} Let $G \in \mathcal{G}_m^2$ with brace $\alpha_2 (3,1,1,2,1)$. Then
$Mo_e(G) < Mo_e(F_4)= m^2-2m-33$.
\end{lemma}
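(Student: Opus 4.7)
The proof plan will closely mirror the edge-shifting arguments used in Lemmas \ref{32}, \ref{33}, and \ref{34}. First, I label the vertices of the brace $\alpha_2(3,1,1,2,1)$ as $v_1,\dots,v_5$ (in the positions indicated in Fig.~\ref{alp2}, but with the path of length $3$ replacing the path of length $2$ between the two ``hub'' vertices compared to Lemma~\ref{33}), and let $a_i$ denote the number of pendent edges attached to $v_i$ for $i=1,\dots,5$. Any $G\in\mathcal{G}_m^2$ with this brace is then determined by $(a_1,\dots,a_5)$ up to attaching pendent trees, and by the pendent-tree remark in Section~2, Lemma~\ref{21} lets us assume all pendent trees are stars, so it suffices to compare $Mo_e$ among these parameterized graphs.

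The main step is a sequence of transformations, each of which moves all pendent edges currently attached to some $v_i$ to a different vertex $v_j$, and each of which strictly increases $Mo_e$ provided the relevant $a_k$ are positive. Following the template of Lemma~\ref{33}, I would (i) first shift pendents from the two ``satellite'' vertices $v_3,v_5$ onto $v_2,v_4$ respectively, (ii) then shift the pendents at one of the hubs onto the other hub, and finally (iii) transfer the pendents at the remaining hub onto the internal vertex of the length-$3$ path that is closest to it. After these three shifts only one vertex carries pendent edges, and the resulting graph is $F_4$. Each shift gives an explicit telescoping difference of contributions $\psi_G(e)-\psi_{G'}(e)$ coming only from the $8$ or $9$ edges of the brace, as in the displayed computations in Lemmas~\ref{32}--\ref{34}; the sign will be positive under the natural monotonicity hypothesis (symmetry reduces us to $a_2+a_4\ge a_3+a_5$) together with $m$ being large enough that the ``large side'' of each cut exceeds the ``small side'' by the trivial constants $2$, $3$, $6$ appearing in the brace.

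The main obstacle, as in the earlier lemmas, is not the conceptual structure but the careful bookkeeping of $m_u(e\,|\,G)-m_v(e\,|\,G)$ for every edge of the brace. Since the brace has three cycles sharing vertices, for each pendent-shifting transformation I must list each internal edge $e=uv$, determine on which side of $e$ each $v_i$ sits (and how many edges of the pendent stars at $v_i$ consequently contribute to $m_u$ versus $m_v$), and tabulate the absolute values; the six or seven nontrivial edge types inside $\alpha_2(3,1,1,2,1)$ each contribute a linear function of the $a_i$'s to $Mo_e(G')-Mo_e(G)$. I expect the difference in each of the three shifts to collapse to a positive linear expression of the form $c_1 a_i + c_2 a_j - c_3$ with small positive constants, analogous to the final tallies $2a_2+6a_3+2a_5-2a_1-6>0$, $3a_4>0$, and $2a_2>0$ seen in the proof of Lemma~\ref{33}.

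Finally, I verify $G_3\cong F_4$ and evaluate $Mo_e(F_4)$ directly. The graph $F_4$ has $m-8$ pendent edges concentrated at a single vertex, with the rest being the $8$ brace-edges of $\alpha_2(3,1,1,2,1)$; the pendent edges each contribute $m-2$, while the $8$ brace edges contribute explicit small constants (independent of $m$, plus a linear-in-$m$ term coming from how many pendents lie on each side of the cut). Summing and simplifying yields $Mo_e(F_4)=m^2-2m-33$, completing the proof.
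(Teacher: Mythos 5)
Your overall strategy (a sequence of pendent-edge shifts over the brace, telescoping the $\psi$-differences edge by edge, then a direct evaluation of the extremal graph) is the same as the paper's. But as written your plan has three concrete defects, two of which are fatal. First, you have miscounted the brace: $\alpha_2(3,1,1,2,1)$ has $8$ edges and hence $6$ vertices (a tricyclic brace with $m$ edges has $m-2$ vertices), namely three branch vertices (one of degree $4$, two of degree $3$) and three degree-$2$ vertices, not the five vertices $v_1,\dots,v_5$ you parameterize with. The paper works with $v_1,\dots,v_6$ and six parameters $a_1,\dots,a_6$, and its very first transformation clears the pendents off $v_5$ \emph{and} $v_6$; in your parameterization the graphs with pendents at the sixth vertex are simply never treated, so the family $\mathcal{G}_m^2$ with this brace is not exhausted. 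Relatedly, the symmetry reduction you invoke ($a_2+a_4\ge a_3+a_5$) is borrowed from the brace $\alpha_2(2,1,1,2,1)$ of Lemma \ref{33}, which has an automorphism exchanging its two length-$2$ paths; the brace $\alpha_2(3,1,1,2,1)$ is asymmetric (a length-$3$ path on one side, a length-$2$ path on the other), so no such normalization is available, and the paper correspondingly makes no such assumption.

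Second, and more seriously, your terminal graph is wrong. You propose ending with all pendents on an internal (degree-$2$) vertex of the length-$3$ path, and you identify that graph with $F_4$. In fact $F_4$ carries all $m-8$ pendent edges at a \emph{branch} vertex of the brace (the paper's $v_2$, an endpoint of the length-$3$ path lying on the triangle): a direct tally over the eight brace edges gives, for pendents at that branch vertex, $Mo_e = (m-8)(m-1)+7(m-8)+15 = m^2-2m-33$, matching the statement, whereas concentrating the pendents at either internal vertex of the length-$3$ path yields $m^2-3m-37$ or $m^2-2m-49$, both strictly smaller. Consequently your step (iii) — shifting the last pendents from the hub onto the internal vertex — would \emph{decrease} the edge Mostar index, the claimed strict inequality at that step fails, and the chain of shifts does not terminate at the extremal graph. (A smaller symptom of the same bookkeeping problem: each pendent edge of $F_4$ contributes $m-1$, not $m-2$, since no edge is equidistant from the two ends of a pendent edge; with your constants the final sum could not simplify to $m^2-2m-33$.) To repair the argument you must re-draw the brace with its six attachment sites and, as in the paper's proof of Lemma \ref{35}, direct all three shifts toward the degree-$3$ branch vertex $v_2$, verifying positivity of the three linear differences $2a_3+4a_5+7a_6-2a_2-2$, $a_1+3a_2+6a_3+5a_4$, and $3a_1+2a_2-2$ in the appropriate cases.
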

\begin{proof}
Suppose that $v_i$ $(i=1,...,6) $ be the vertices in $\alpha_2 (3,1,1,2,1)$ of $G$, as shown in Fig. \ref{alp2}.  Let $a_i$ be the number of pendent edges of $v_i$ ($i=1,...,6$). For $a_6 >0$, let $G_1$ be the graph obtained from $G$ by shifting $a_5$ (resp. $a_6$)  pendent edges from $v_5$ (resp. $v_6$)  to $v_1$. We obtain
\begin{eqnarray*}
&  &  Mo_e(G_1 )-  Mo_e(G )=\\
  &  & (a_1+a_3+ a_5+ a_6+3-a_2-2)-(a_1+a_3+a_5+3-a_2-a_6-2)\\
                       &+ & (a_1+a_2+a_3+a_4+a_5+a_6+5-1)-(a_1+a_2+a_3+a_4+5-a_5-a_6-1)\\
                       & + & (a_1+a_2+a_3+a_4+a_5+a_6+5-1)-(a_1+a_2+a_3+a_4+5-a_5-a_6-1)\\
                       &+ & (a_1+a_3+a_5+a_6+3-a_2-2)-(a_1+a_3+a_5+3-a_2-a_6-2)\\
                       &+ & (a_2+a_4+3-a_3-1)-(a_2+a_4+a_6+3-a_3-1)+(a_3+a_4+2-a_2-3)\\
                       &- & (a_2+a_6+3-a_3-a_4-2)+(a_1+a_5+a_6+4-a_2-2)-(a_1+a_5+4-a_4-2)\\
                       & = & 2a_3+4a_5+7a_6-2a_2-2 > 0.
\end{eqnarray*}
 Let $G_2$ be the graph obtained from $G_1$ by shifting $a_3$ (resp. $a_4$) pendent edges from $v_3$ (resp. $v_4$) to $v_1$ (resp. $v_2$). We deduce that
\begin{eqnarray*}
&  & Mo_e(G_2 )-  Mo_e(G_1 ) =\\
 &  & (a_1+a_2+a_3+a_4+2-3)-(a_1+a_3+3-a_2-2)+(a_1+a_2+a_3+a_4+2-3)\\
                       & - & (a_1+a_3+3-a_2-2)+(a_1+a_2+a_3+a_4+5-1)-(a_1+a_2+5-a_3-1)\\
                       &+ & (a_1+a_2+a_3+a_4+3-1)-(a_2+a_4+3-a_3-1)+(a_1+a_2+a_3+a_4+3-2)\\
                       & - & (a_2+3-a_3-a_4-2)\\
                       & = &a_1+ 3a_2+6a_3+5a_4 > 0.
\end{eqnarray*}
 Let $G_3$ be the graph obtained from $G_2$ by shifting $a_1$  pendent edges from $v_1$ to $v_2$. We obtain
\begin{eqnarray*}
&  & Mo_e(G_3 )-  Mo_e(G_2 ) =\\
 &  & (a_1+a_2+2-3)-(a_1+3-a_2-2)+(a_1+a_2+2-3)-(a_2+2-a_1-3)\\
                       & + & (a_1+a_2+3-1)-(a_2+3-1)+(a_1+a_2+3-2)-(a_2+3-2)+(4-2)-(a_1+4-2)\\
                       & = &3a_1+ 2a_2-2 > 0.
\end{eqnarray*}
Thus, $ Mo_e(G )< Mo_e(G_1 )< Mo_e(G_2 )< Mo_e(G_3 )$. Clearly, $G_3 \cong F_4$, and $Mo_e(F_4) =  m^2-2m-33$.
\end{proof}

\begin{lemma}\label{42} Let $G \in \mathcal{G}_m^2$ of size $m$. Then
$Mo_e(G) < m^2-m-36$ for $m \geq 9$, and $Mo_e(G) \leq Mo_e(F_1)$ for $m \leq 9$.
\end{lemma}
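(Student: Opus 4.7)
The plan is to follow the case-analysis strategy of Lemma~\ref{41}, now applied to the brace $\alpha_2$. Writing the brace as $\alpha_2(a_1,\ldots,a_5)$, I would split into cases according to how many of the five paths of $\alpha_2$ have length at least~$2$.

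If at least three paths of $\alpha_2$ have length $\geq 2$, I would further distinguish---exactly as in Subcases~1.1--1.3 of Lemma~\ref{41}---whether the three long paths enclose a cycle of $\alpha_2$, compose a longer path, or share a common vertex. In each configuration I would select nine strategically placed edges: two near the ends of each long path and one incident to an endpoint of each of the remaining short paths. Standard distance counting gives $\psi_G(e) \leq m - c_e$ for each selected edge, with $c_e \geq 5$; bounding the remaining $m-9$ edges trivially by $m-1$ produces an upper estimate of the form
\[
Mo_e(G) \leq 9(m-c) + (m-9)(m-1),
\]
which falls strictly below $m^2-m-36$ for $m \geq 9$. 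If exactly two paths of $\alpha_2$ have length $\geq 2$, I would repeat the argument with eight distinguished edges, now subdividing according to whether the two long paths lie in a common cycle of $\alpha_2$, in distinct cycles, or share only a vertex. The same type of estimate yields the claim.

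If at most one path of $\alpha_2$ has length $\geq 2$, then up to the symmetries of $\alpha_2$ its brace is one of $\alpha_2(2,1,1,2,1)$, $\alpha_2(2,1,1,2,2)$, or $\alpha_2(3,1,1,2,1)$, and the bound follows directly from Lemmas~\ref{33}, \ref{34}, \ref{35}, yielding
\[
Mo_e(G) \leq \max\{Mo_e(F_1), Mo_e(F_2), Mo_e(F_3), Mo_e(F_4)\}.
\]
With $Mo_e(F_1)=m^2-4m-9$, $Mo_e(F_2)=m^2-3m-26$, $Mo_e(F_3)=m^2-3m-20$ and $Mo_e(F_4)=m^2-2m-33$, a direct comparison shows that all four quantities are bounded above by $m^2-m-36$ when $m \geq 9$; for $m \leq 9$ the same ranking identifies $Mo_e(F_1)$ as the largest, giving the second assertion.

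The main obstacle I anticipate is the bookkeeping in the first two cases: the constant $c_e$ in the bound $\psi_G(e) \leq m - c_e$ depends sensitively on the position of $e$ in the brace and on the arrangement of the long paths. Reducing the number of subcases to a manageable list will require carefully exploiting the symmetry group of $\alpha_2$, much as was done for $\alpha_1$ in Lemma~\ref{41}. A secondary, smaller check will be that the small borderline values of $m$ (specifically $m=9$) are handled consistently between the two claimed inequalities, since at $m=9$ both bounds evaluate to $36$.
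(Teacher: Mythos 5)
Your overall toolkit---generic edge-selection bounds combined with the shifting Lemmas~\ref{33}, \ref{34}, \ref{35} for small braces---is the same as the paper's, but your trichotomy by the number of paths of length at least $2$ rests on a structural misreading of $\alpha_2$, and it routes exactly the critical braces into the wrong case. In $\alpha_2$ the five paths form two parallel pairs (between $x,y$ and between $x,z$) plus one path joining $y$ and $z$; since the graphs are simple, each parallel pair must contain a path of length at least $2$, so $\alpha_2$ always has at least two long paths and your case ``at most one path of length $\geq 2$'' is empty. Worse, the three braces you place in that case actually have two long paths ($\alpha_2(2,1,1,2,1)$ and $\alpha_2(3,1,1,2,1)$) or three ($\alpha_2(2,1,1,2,2)$), so under your own classification they would be processed by your eight- or nine-edge counting argument---and there the counting provably cannot succeed. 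A selection of eight edges with $\psi_G(e) \leq m-c$ gives $Mo_e(G) \leq 8(m-c)+(m-8)(m-1) = m^2-m-(8c-8)$, which beats $m^2-m-36$ only if $c \geq 6$ for every chosen edge; the brace $\alpha_2(2,1,1,2,1)$ has just seven brace edges and admits no such selection. In fact no strict counting bound can exist at $m=9$ for this brace, because $F_1$ attains $Mo_e(F_1)=m^2-4m-9=36=m^2-m-36$ there; and the second assertion of the lemma, that $Mo_e(G) \leq Mo_e(F_1)$ for $m \leq 9$ with $F_1$ identified as extremal, is invisible to any crude count and requires the pendant-edge shifting comparison of Lemma~\ref{33}.

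The paper avoids this by organizing the cases around the two forced long paths: taking $a_4, a_5 \geq 2$ to be the longer member of each parallel pair, it splits into $a_4,a_5 \geq 3$; $a_4 \geq 3, a_5=2$; and $a_4=a_5=2$, refined further by whether some $a_i>1$ for $i \in \{1,2,3\}$. The counting argument is applied only when the long paths are long enough to supply middle edges with strong constants (for instance $a_4 \geq 4$ in its Subcase~2.1), while the small configurations are delegated: $(a_4,a_5)=(3,2)$ with $a_1=a_2=a_3=1$ to Lemma~\ref{35}, $a_4=a_5=2$ with $a_1=2$ to Lemma~\ref{34}, and $a_4=a_5=2$ with $a_1=a_2=a_3=1$ to Lemma~\ref{33}, which is the only place the $F_1$ bound for $m \leq 9$ can emerge. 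Note also that your subcase geometry for three long paths (enclosing a cycle, composing a path, sharing a vertex) is imported from the $K_4$-type brace $\alpha_1$ of Lemma~\ref{41} and does not match the two-parallel-pairs-plus-bridge structure of $\alpha_2$. Your closing observation about the borderline value $m=9$ is correct as far as it goes, but it does not repair the misrouted cases.
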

\begin{proof}
Suppose that $G \in \mathcal{G}_m^2$, then $G$ has a brace $\alpha_2 (a_1, a_2, a_3,  a_4, a_5)$ as shown in Fig. \ref{alp3}. Assume that $a_4, a_5 \geq 2$. We consider the following three possible cases.

\noindent {\bf Case 1.} $a_4, a_5 \geq 3$.

\noindent {\bf Subcase 1.1.} $a_1= a_2= a_3 =1$.

 We choose nine edges, three edges in the path $P(a_4)$  such that two are incident to $x$  or $y$ and one is in the middle of $P(a_4)$, three edges in the path $P(a_5)$  such that two are incident to $x$ or $z$ and one is in the middle of $P(a_5)$, one edge in the path $P(a_2)$  incident to $x$, one edge in the path $P(a_3)$  incident to $x$ and one edge in the path $P(a_1)$  incident to $y$. We have
\[
Mo_e(G) \leq 4(m-4)+4(m-7)+(m-9)+(m-9)(m-1) < m^2-m-36.
\]

\noindent {\bf Subcase 1.2.} At least one of $a_1, a_2, a_3$ is greater than 1.

If $a_2, a_3 \geq 2$, then we choose 10 edges, three edges in the path $P(a_4)$  such that two are incident to $x$  or $y$ and one is in the middle of $P(a_4)$, three edges in the path $P(a_5)$  such that two are incident to $x$ or $z$ and one is in the middle of $P(a_5)$, two edges  in the path $P(a_2)$  incident to $x$ or $y$, one edge in the path $P(a_3)$  incident to $x$ and one edge in the path $P(a_1)$  incident to $y$. We have
\[
Mo_e(G) \leq 2(m-4)+ (m-5)+2(m-6)+2(m-8)+3(m-9)+(m-10)(m-1) < m^2-m-36.
\]

If $a_1 \geq 2$, then we choose 10 edges, three edges in the path $P(a_4)$  such that two are incident to $x$  or $y$ and one is in the middle of $P(a_4)$, three edges in the path $P(a_5)$  such that two are incident to $x$ or $z$ and one is in the middle of $P(a_5)$, two edges  in the path $P(a_2)$  incident to $x$ or $y$, one edge in the path $P(a_3)$  incident to $x$ and two edges in the path $P(a_1)$  incident to $y$ or $z$. We obtain
\[
Mo_e(G) \leq 4(m-4)+ 6(m-7)+(m-10)(m-1) < m^2-m-36.
\]

\noindent {\bf Case 2.} $a_4 \geq 3, a_5 = 2$.

\noindent {\bf Subcase 2.1.}$a_4 \geq 4, a_5 = 2$, and $a_1= a_2= a_3 =1$.

 We choose nine edges, four edges in the path $P(a_4)$  such that two are incident to $x$  or $y$ and two are in the middle of $P(a_4)$, two edges in the path $P(a_5)$  such that one is incident to $x$  and one is in the middle of $P(a_5)$, one edge in the path $P(a_2)$  incident to $x$, one edge in the path $P(a_3)$  incident to $x$ and one edge in the path $P(a_1)$  incident to $y$. We have
\[
Mo_e(G) \leq (m-4)+2(m-5)+(m-6)+2(m-7)+3(m-8)+(m-9)(m-1) < m^2-m-36.
\]

\noindent {\bf Subcase 2.2.}$a_4 =3, a_5 = 2$, and $a_1= a_2= a_3 =1$.

The Subcase follows from Lemma \ref{35}.

\noindent {\bf Subcase 2.3.}$a_4 \geq 3, a_5 = 2$, and at least one of $a_1, a_2, a_3$ is greater than 1.

The proof is similar to the Subcase 2.1.

\noindent {\bf Case 3.} $a_4 = a_5 = 2$.

\noindent {\bf Subcase 3.1.} At least one of $a_1, a_2, a_3$ is greater than 1.

If $a_2, a_3 \geq 2$, then we choose eight edges, three edges in the path $P(a_4)$  such that two are incident to $x$  or $y$ and one is in the middle of $P(a_4)$, two edges in the path $P(a_5)$  such that one is incident to $x$  and other is in the middle of $P(a_5)$, two edges  in the path $P(a_2)$  incident to $x$ or $y$, one edge in the path $P(a_3)$  incident to $x$ and one edge in the path $P(a_1)$  incident to $y$. We have
\[
Mo_e(G) \leq 4(m-5)+4(m-7)+(m-8)(m-1) < m^2-m-36.
\]

If $a_1 \geq 3$, then we choose nine edges, two edges in the path $P(a_4)$  such that one is incident to $x$  and other is in the middle of $P(a_4)$, two edges in the path $P(a_5)$  such that one is incident to $x$  and the other is in the middle of $P(a_5)$, one edge  in the path $P(a_2)$  incident to $x$, one edge in the path $P(a_3)$  incident to $x$ and three edges in the path $P(a_1)$ such that two are  incident to $y$ or $z$ and one is in the middle of $P(a_1)$. We obtain
\[
Mo_e(G) \leq 2(m-5)+ 2(m-6)+4(m-7)+(m-9)+(m-9)(m-1) < m^2-m-36.
\]

If $a_1 = 2$, then by Lemma \ref{34},
$
Mo_e(G) \leq  m^2-3m-20 < m^2-m-36.
$

\noindent {\bf Subcase 3.2.} $a_1= a_2= a_3 =1$.

By Lemma \ref{33}, we have $Mo_e(G) < m^2-m-36$ for $m \geq 9$, and $Mo_e(G) \leq Mo_e(F_1)$ for $m \leq 9$.
\end{proof}

 \begin{figure}
\centering
\includegraphics[height=4cm, width=13cm]{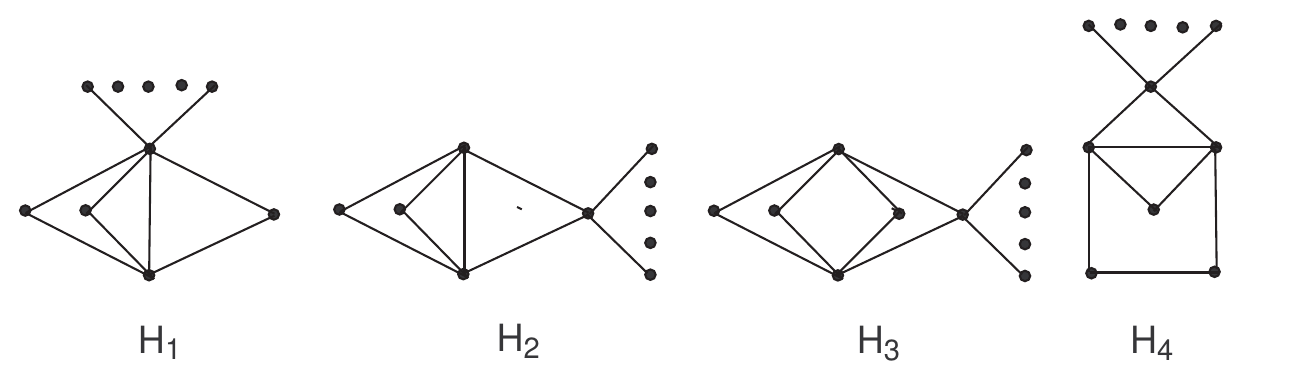}
\caption{ The Graphs $H_1, H_2, H_3, H_4$ of size $m$ in Lemmas \ref{36}, \ref{37} and \ref{38}.}
\label{h}
\end{figure}

\begin{lemma}\label{36} Let $G \in \mathcal{G}_m^3$ with brace $\alpha_3 (1,2,2,2)$. Then
$$Mo_e(G) < \left\{\begin{array}{ll}
   Mo_e(H_1)=     m^2-4m-9,  & \hbox{if $7 \leq m \leq 10$};  \\
    Mo_e(H_1)= Mo_e(H_2)=    68,        & \hbox{if $ m=11$};  \\
    Mo_e(H_2)=    m^2-2m-31,  & \hbox{if $m \geq 12$}.
\end{array}\right.$$
\end{lemma}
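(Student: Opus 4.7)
The plan is to follow exactly the same edge--shifting scheme used in Lemmas \ref{32} and \ref{33}. Label the four distinguished vertices of $\alpha_3(1,2,2,2)$ as $v_1,v_2,v_3,v_4$ (as indicated in Fig.~\ref{alp2}), and let $a_i$ denote the number of pendent edges attached at $v_i$, so that $a_1+a_2+a_3+a_4 = m - 7$. Exploiting the symmetries of the brace $\alpha_3(1,2,2,2)$, I would first reduce to a canonical ordering of the $a_i$ (say, $a_1+a_2 \geq a_3+a_4$, with further WLOG assumptions on which of $a_3, a_4$ is larger). The plan is then to perform a sequence of pendent--shifting operations $G \to G_1 \to G_2 \to \cdots \to G_k$, each of which either strictly increases $Mo_e$ or keeps it fixed only in a degenerate case, and terminates at one of the two candidate graphs $H_1$ or $H_2$ (drawn in Fig.~\ref{h}).

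For each shift $G_i \to G_{i+1}$, I would compute $Mo_e(G_{i+1}) - Mo_e(G_i)$ edge by edge, exactly as done in Lemmas \ref{32}--\ref{35}. Only those edges whose endpoints lie in the brace contribute nonzero $\psi$--values after the shift (since pendent trees are now rooted at a common vertex and thus contribute symmetrically), so the bookkeeping reduces to enumerating the eight brace--edges and writing each $\psi(e)$ in terms of the $a_i$ and the fixed brace--distances. After simplification each shift's net gain should be a nonnegative linear combination of the $a_i$, strictly positive unless the associated $a_i$'s are zero. A typical sequence of shifts would be: (i) consolidate all pendants of $v_3,v_4$ (the ``inner'' vertices of the long paths of the brace) onto $v_1$ and $v_2$; (ii) consolidate pendants from $v_2$ onto $v_1$, which will produce $H_1$ when one of the accumulated counts remains zero, and $H_2$ after a final shift that moves pendants across the brace to a configuration with two heavy vertices. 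The condition controlling whether this last shift is beneficial will, by analogy with Lemmas \ref{32} and \ref{33}, involve an inequality on $m$; here it will be $m \geq 11$.

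The remaining work is a direct computation of $Mo_e(H_1)$ and $Mo_e(H_2)$ from their definitions. For $H_1$ one sums $|m_u(e|G)-m_v(e|G)|$ over the finitely many brace--edges (the pendent edges all contribute $m-1$ since they are incident to a vertex of very high degree and a leaf), obtaining $m^2-4m-9$; similarly $H_2$ yields $m^2-2m-31$. Solving $m^2 - 4m - 9 = m^2 - 2m - 31$ gives $m = 11$, which splits the piecewise bound: $Mo_e(H_1) > Mo_e(H_2)$ for $m \leq 10$, equality at $m=11$, and $Mo_e(H_2) > Mo_e(H_1)$ for $m \geq 12$. Combining this with the shift chain gives the claimed strict inequality $Mo_e(G) < \max\{Mo_e(H_1), Mo_e(H_2)\}$ for every $G \in \mathcal{G}_m^3$ with brace $\alpha_3(1,2,2,2)$ other than the endpoint graphs themselves.

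The main obstacle will be the mechanical but delicate step of writing out $\psi_{G_i}(e)$ and $\psi_{G_{i+1}}(e)$ for every brace--edge $e$ after each shift. Because the brace $\alpha_3(1,2,2,2)$ has eight edges and the shifts redistribute potentially many pendants, care is needed to correctly identify which vertices lie on the ``$u$--side'' versus the ``$v$--side'' of each edge $e=uv$ --- especially for the edges of the short (length--$1$) segment of the brace, where small absolute differences in distance can flip the sign of $m_u(e|G) - m_v(e|G)$. Handling these sign flips, and verifying that no exceptional small--$m$ cases (say $m = 7, 8$) need separate treatment because of degenerate edge counts, is where the bulk of the bookkeeping lies; the algebraic simplifications afterward should fall out as in the earlier lemmas.
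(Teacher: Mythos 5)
Your overall strategy---a chain of pendant-shifting operations ending at the two candidate graphs, followed by direct evaluation of $Mo_e(H_1)$ and $Mo_e(H_2)$ and the crossover at $m=11$---is the same as the paper's, but two concrete errors break the proposal as written. First, the anatomy of the brace is wrong: $\alpha_3(1,2,2,2)$ consists of two hubs $v_1,v_2$ of degree $4$ joined by four internally disjoint paths of lengths $1,2,2,2$, so it has \emph{five} vertices that can carry pendants (the two hubs plus the three middle vertices of the length-$2$ paths) and \emph{seven} brace edges, not four attachment sites and eight edges as you assume (your ``$a_1+a_2+a_3+a_4=m-7$'' and ``eight brace-edges'' are mutually inconsistent as well). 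Consequently your symmetry reduction ``$a_1+a_2\geq a_3+a_4$'' is not a legitimate normalization: the automorphisms only swap the two hubs and permute the three length-$2$ paths, which is why the paper works with five parameters $a_1,\dots,a_5$ and normalizes $a_3\geq a_4\geq a_5$.

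Second, and more seriously, your step (i)---consolidating all pendants of the inner (degree-$2$) vertices onto the hubs---moves in the wrong direction in exactly the regime that matters. The extremal graph $H_2$ for $m\geq 12$ carries all $m-7$ pendants at a \emph{middle} degree-$2$ vertex, while $H_1$ carries them at a hub; since $Mo_e(H_1)-Mo_e(H_2)=(m^2-4m-9)-(m^2-2m-31)=22-2m<0$ for $m\geq 12$, shifting middle-vertex pendants onto a hub strictly \emph{decreases} $Mo_e$ there, so your chain is not $Mo_e$-nondecreasing and cannot deliver the upper bound $Mo_e(G)<Mo_e(H_2)$ for large $m$. (Your guess that $H_2$ is a ``two heavy vertices'' configuration is also incorrect: both candidates are single-heavy-vertex graphs.) The paper instead first consolidates the three middle pendant groups onto one middle vertex $v_3$ (with a conditional gain $4(a_3+a_4+a_5)-2(a_1+a_2)-8$), then the hub pendants onto one hub $v_1$, reducing to configurations supported on $\{v_1,v_3\}$, and only then compares the two endpoints $H_1$ (all pendants at $v_1$) and $H_2$ (all pendants at $v_3$) by a final conditional shift; it is this last comparison between the two one-heavy-vertex graphs, not a condition buried inside your step (ii), that produces the split of the bound at $m=11$, and the failure of the shift conditions for few pendants is what generates the small-$m$ branch where $H_1$ wins. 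To repair your proposal you would need to redo the bookkeeping with five attachment parameters over seven brace edges and reverse the direction of your consolidation onto an inner vertex.
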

\begin{proof}
Suppose that $v_i$ $(i=1,...,5) $ be the vertices in $\alpha_3 (1,2,2,2)$ of $G$ with $d_G(v_1)=d_G(v_2)=4$ and  $d_G(v_3)=d_G(v_4)=d_G(v_5)=2$, as shown in Fig. \ref{alp2}.  Let $a_i$ be the number of pendent edges of $v_i$ $(i=1,...,5) $. Suppose that $a_3 \geq  a_4 \geq a_5$. For $a_4 +a_5 > a_1+a_2+8$, let $G_1$ be the graph obtained from $G$ by shifting $a_4$ (resp. $a_5$) pendent edges from $v_4$ (resp. $v_5$) to $v_3$. We deduce that
\begin{eqnarray*}
&  &  Mo_e(G_1 )-  Mo_e(G ) =\\
 & & (a_3+a_4+a_5+1-a_1-3)-(a_1+a_4+a_5+3-a_3-1)+(a_3+a_4+a_5+1-a_2-3)\\
                       &- & (a_1+a_4+a_5+3-a_3-1)+(a_1+a_3+a_4+a_5+3-1)-(a_1+a_3+a_5+3-a_4-1)\\
                       &+ & (a_2+a_3+a_4+a_5+3-1)-(a_2+a_3+a_5+3-a_4-1)+ (a_1+a_3+a_4+a_5+3-1)\\
                       &- &(a_1+a_3+a_4+3-a_5-1)+(a_1+a_3+a_4+a_5+3-1)-(a_2+a_3+a_4+3-a_5-1)\\
                       & = & 4( a_3+a_4 + a_5 )-2(a_1+a_2) -8> 0.
\end{eqnarray*}
For $a_2 +a_3 > 1$, let $G_2$ be the graph obtained from $G_1$ by shifting $a_2$ ) pendent edges from $v_2$  to $v_1$. We have
\begin{eqnarray*}
 &  &Mo_e(G_2 )-  Mo_e(G_1 ) =\\
 &  & (a_1+a_2+3-a_3-1)-(a_1+3-a_3-1)+(a_3+1-3)-(a_2+3-a_3-1)\\
                       & + & (a_1+a_2+3-3)-(a_1+3-a_2-3)+(a_1+a_2+a_3+3-1)-(a_1+a_3+3-1)\\
                       &+ & (a_3+3-1)-(a_2+a_3+3-1)+(a_1+a_2+a_3+3-1)-(a_1+a_3+3-1)\\
                       &+ & (a_3+3-1)-(a_2+a_3+3-1)\\
                       & = & 2( a_2+a_3 )-4> 0.
\end{eqnarray*}
Clearly, $G_2 \cong H_2$ for $a_1=0, a_3 >0$, and $G_2 \cong H_1$ for $a_3=0, a_1 >0$. For $a_1 +a_3 > 2$, let $G_3$ be the graph obtained from $G_2$ by shifting $a_1$  pendent edges from $v_1$  to $v_3$. We have
\begin{eqnarray*}
&  & Mo_e(G_2 )-  Mo_e(G_1 ) =\\
 &  & (a_1+a_3+1-3)-(a_1+3-a_3-1)+(a_1+a_3+1-3)-(a_3+1-3)+(3-3)\\
                       & - & (a_1+3-3)+(a_1+a_3+3-1)-(a_3+3-1)+(a_1+a_3+3-1)-(a_3+3-1)\\
                       & = & 2( a_1+a_3 )-4> 0.
\end{eqnarray*}
Thus, $ Mo_e(G )< Mo_e(G_1 )< Mo_e(G_2 )< Mo_e(G_3 )$. Clearly, $G_3 \cong H_2$, and by simple calculation, we deduce that $Mo_e(H_2) =  m^2-2m-31$, $Mo_e(H_1) =  m^2-4m-9$.
\end{proof}

\begin{lemma}\label{37} Let $G \in \mathcal{G}_m^3$ with brace $\alpha_3 (2,2,2,2)$. Then
$Mo_e(G) < Mo_e(H_3) = m^2-m-48$.
\end{lemma}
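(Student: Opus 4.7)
The plan is to apply the edge-shifting (Kelmans-style) technique used in Lemmas~\ref{32}--\ref{36} to consolidate all pendent edges onto a single vertex of the brace, and then to evaluate $Mo_e(H_3)$ directly. Label the six vertices of $\alpha_3(2,2,2,2)$ by $v_1,v_2$ (the two hub vertices of degree $4$) and $v_3,v_4,v_5,v_6$ (the four intermediate vertices of degree $2$, each adjacent to both hubs). Let $a_i$ denote the number of pendent edges attached at $v_i$, so $\sum_{i=1}^6 a_i = m-8$. Using the obvious symmetries of the brace I may assume $a_3\ge a_4\ge a_5\ge a_6$ and $a_1\ge a_2$.

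I would perform three successive shifts. First, form $G_1$ from $G$ by moving the pendents at $v_4,v_5,v_6$ onto $v_3$; by the symmetry among the four intermediate vertices, only the prototype transfer $v_4\to v_3$ needs to be computed in detail, and the shifts for $v_5,v_6$ are analogous. Next, form $G_2$ by moving the $a_2$ pendents at $v_2$ onto $v_3$, and then $G_3$ by moving the $a_1$ pendents at $v_1$ onto $v_3$. The graph $G_3$ has all $m-8$ pendents at $v_3$, hence $G_3\cong H_3$. For each step I would tabulate the change $Mo_e(G_{i+1})-Mo_e(G_i)$ by recording, edge by edge, the old and new values of $m_u(e)-m_v(e)$ for those edges whose $\psi$-value is affected: the brace edges incident to the donor and recipient, and (for hub-to-intermediate transfers) the brace edges on the opposite side. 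Mirroring the structure of Lemma~\ref{36}, each such difference should simplify to a strictly positive linear combination of the $a_j$'s being moved, possibly under a mild auxiliary assumption that can be discharged either by the chosen orderings or by a direct comparison with $H_3$ in the leftover cases.

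Finally, I would verify $Mo_e(H_3)=m^2-m-48$ by direct computation in $H_3$. The $m-8$ pendent edges at $v_3$ each contribute $\psi=m-1$. For $e=v_1v_3$ (and by symmetry $e=v_3v_2$) one finds $m_{v_3}(e)=m-7$ (the pendents at $v_3$ together with the brace edge on the far side) and $m_{v_1}(e)=3$ (the brace edges $v_1v_4,v_1v_5,v_1v_6$), while the three edges $v_4v_2,v_5v_2,v_6v_2$ are equidistant, giving $\psi=m-10$. For $e=v_1v_j$ with $j\in\{4,5,6\}$ (and by symmetry $e=v_jv_2$) one obtains $m_{v_1}(e)=m-5$ and $m_{v_j}(e)=1$, so $\psi=m-6$. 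Summing,
\[
Mo_e(H_3)=(m-8)(m-1)+2(m-10)+6(m-6)=m^2-m-48.
\]

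The main obstacle will be Step~1, where three intermediate vertices shed their pendents onto a fourth simultaneously: the change in the unbalance at each brace edge depends on how many pendents have already been moved. To keep the bookkeeping tractable I would perform the three transfers sequentially ($v_4\to v_3$, then $v_5\to v_3$, then $v_6\to v_3$) and invoke symmetry so that only one archetypal difference calculation appears in print. A secondary concern is identifying the precise regime in which each shift strictly increases $Mo_e$: I expect the orderings $a_3\ge a_4\ge a_5\ge a_6$ and $a_1\ge a_2$ to suffice, but any residual exceptional configuration can be dispatched by treating a small number of boundary cases directly, as is done within the proofs of Lemmas~\ref{33}--\ref{36}.
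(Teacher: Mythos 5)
Your proposal is correct and follows essentially the same route as the paper's own proof: the paper likewise consolidates the pendent edges onto $v_3$ — shifting the pendents at $v_4,v_5,v_6$ to $v_3$ in one simultaneous step (increment $4(a_3+a_4+a_5+a_6)-8>0$) and then the hub pendents $a_1,a_2$ to $v_3$ (increment $10a_1+6a_2+2a_3-8>0$) — with the negative constants absorbed by exactly the kind of auxiliary hypotheses ($a_6>0$, $a_1+a_2>0$, $a_3$ already enlarged by the first shift) that you anticipated, before identifying the resulting graph with $H_3$. Your explicit evaluation $Mo_e(H_3)=(m-8)(m-1)+2(m-10)+6(m-6)=m^2-m-48$ is correct for $m\ge 10$ (for smaller $m$ the term $m-10$ would flip sign, a regime irrelevant here), and in fact goes beyond the paper, which asserts this value only as a ``simple calculation''.
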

\begin{proof}
Suppose that $v_i$ $(i=1,...,6) $ be the six vertices in $\alpha_3 (2,2,2,2)$ of $G$ with $d_G(v_1)=d_G(v_2)=4$ and  $d_G(v_3)=d_G(v_4)=d_G(v_5)=d_G(v_6)=2$, as shown in Fig. \ref{alp2}.  Let $a_i$ be the number of pendent edges of $v_i$ $(i=1,...,6) $. Suppose that $a_3 \geq  a_4 \geq a_5 \geq a_6>0$. Let $G_1$ be the graph obtained from $G$ by shifting $a_i$ ( $i \geq 4$) pendent edges from $v_i$ ( $i \geq 4$) to $v_3$. We obtain
\begin{eqnarray*}
 &  & Mo_e(G_1 )-  Mo_e(G )=\\
  & & (a_2+a_3+ a_4+a_5+ a_6+1-a_1-3)-(a_1+ a_4+a_5+a_6+3-a_2-a_3-1)\\
                       & + & (a_1+a_3+a_4+a_5+a_6+1-a_2-3)-(a_2+a_4+a_5+a_6+3-a_1-a_3-1)\\
                       &+ & (a_1+a_3+a_4+a_5+a_6+3-a_2-1)-(a_1+a_3+a_5+a_6+3-a_4-a_2-1)\\
                       & + &(a_2+a_3+a_4+a_5+a_6+3-a_1-1)-(a_2+a_3+a_5+a_6+3-a_1-a_4-1)\\
                       & + & (a_1+a_3+a_4+a_5+a_6+3-a_2-1)-(a_1+a_3+a_4+a_6+3-a_2-a_5-1)\\
                       &+ & (a_2+a_3+a_4+a_5+a_6+3-a_1-1)-(a_2+a_3+a_4+a_6+3-a_1-a_5-1)\\
                       &+ & (a_1+a_3+a_4+a_5+a_6+3-a_2-1)-(a_1+a_3+a_4+a_5+3-a_2-a_6-1)\\
                       &+ & (a_2+a_3+a_4+a_5+a_6+3-a_1-1)-(a_2+a_3+a_4+a_5+3-a_1-a_6-1)\\
                       & = & 4(a_3+a_4+a_5+a_6)-8 > 0.
\end{eqnarray*}
For $a_1 +a_2 > 0$, let $G_2$ be the graph obtained from $G_1$ by shifting  $a_1$ (resp. $a_2$) pendent edges from $v_1$ (resp. $v_2$) to $v_3$. We have
\begin{eqnarray*}
&  &  Mo_e(G_2 )-  Mo_e(G_1 ) =\\
 &  & (a_1+a_2+a_3+1-3)-(a_2+a_3+1-a_1-1)+(a_1+a_2+a_3+1-3)\\
                       &- & (a_2+3-a_1-a_3-1)+(a_1+a_2+a_3+3-1)-(a_1+a_3+3-a_2-1)\\
                       &+ & (a_1+a_2+a_3+3-1)-(a_2+a_3+3-a_1-1)+(a_1+a_2+a_3+3-1)\\
                       &- & (a_1+a_3+3-a_2-1)+(a_1+a_2+a_3+3-1)-(a_2+a_3+3-a_1-1)\\
                       &+ & (a_1+a_2+a_3+3-1)-(a_1+a_3+3-a_2-1)+(a_1+a_2+a_3+3-1)\\
                       &- & (a_2+a_3+3-a_1-1)\\
                       & = & 10a_1+6a_2+2a_3-8> 0.
\end{eqnarray*}
Thus, $ Mo_e(G )< Mo_e(G_1 )< Mo_e(G_2 )$. Clearly, $G_2 \cong H_3$, and by simple calculation, we obtain $Mo_e(H_3) =  m^2-m-48$.
\end{proof}

\begin{lemma}\label{38} Let $G \in \mathcal{G}_m^3$ with brace $\alpha_3 (1,2,2,3)$. Then
$Mo_e(G) < Mo_e(H_4)= m^2-3m-24$.
\end{lemma}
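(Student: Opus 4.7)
The plan is to follow the same transformation strategy used in Lemmas \ref{36} and \ref{37}: parameterize $G$ by the numbers $a_i$ of pendent edges attached to the relevant vertices of the brace $\alpha_3(1,2,2,3)$, and then apply a sequence of edge-shifting operations that strictly increase $Mo_e$ until the terminal graph is $H_4$. A direct count then gives $Mo_e(H_4) = m^2-3m-24$.

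Concretely, label the two degree-$4$ hubs of $\alpha_3(1,2,2,3)$ by $v_1, v_2$, the internal vertices of the two length-$2$ paths by $v_3, v_4$, and the two internal vertices of the length-$3$ path by $v_5, v_6$ (with $v_5$ adjacent to $v_1$ and $v_6$ adjacent to $v_2$). Let $a_i$ denote the number of pendent edges at $v_i$, $1 \le i \le 6$. Without loss of generality we may assume $a_3 \ge a_4$. Then perform the shifts in the following order: first move the $a_5$ and $a_6$ pendents from the interior of the length-$3$ path to $v_3$; next move the $a_4$ pendents from $v_4$ to $v_3$; and finally move the hub pendents $a_1, a_2$ to $v_3$. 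For each shift $G \to G'$, write $Mo_e(G')-Mo_e(G)$ as a sum over edges of $|\psi_{G'}(e)|-|\psi_G(e)|$; after the usual cancellations this sum reduces to a linear expression in the $a_i$ that is manifestly positive (under standing inequalities analogous to $a_4+a_5 > a_1+a_2+8$ in Lemma \ref{36}), exactly as in the bookkeeping displays of Lemmas \ref{36} and \ref{37}. The resulting terminal graph is $H_4$, and the final edge-by-edge evaluation yields $Mo_e(H_4) = m^2-3m-24$.

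The main obstacle will be the bookkeeping for the two internal vertices $v_5, v_6$ of the length-$3$ path. In the earlier lemmas every middle vertex lies on a length-$2$ path and is therefore at distance $1$ from both hubs, so the $m_u(e|G)-m_v(e|G)$ counts are symmetric with respect to $v_1$ and $v_2$. Here this symmetry is broken: $v_5$ is at distance $1$ from $v_1$ and $2$ from $v_2$, and vice versa for $v_6$. Consequently, for an edge $e$ on or adjacent to the length-$3$ path one must distinguish the side of the path on which a given pendent tree sits, and the shifts $v_5 \to v_3$ and $v_6 \to v_3$ produce different cross-contributions to the other three paths. The resolution is to choose the shift targets and the order of shifts so that every affected edge lies strictly closer to one of the two hubs (so $\psi$ is monotone in the shifted pendents), after which all the arising sums collapse to positive linear combinations of the $a_i$ and the desired strict inequality $Mo_e(G) < Mo_e(H_4)$ follows.
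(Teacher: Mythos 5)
You follow the same route as the paper's proof: parameterize by the pendent multiplicities $a_1,\dots,a_6$, shift all pendent edges onto the internal vertex $v_3$ of a length-two path (the paper does this in two moves, you in three), and evaluate the terminal graph $H_4$ directly. Everything therefore rests on the positivity claims you defer with ``manifestly positive'', and that is precisely where the argument breaks: the final move of the hub pendents $a_1, a_2$ to $v_3$ does not increase $Mo_e$ in general. Check it with your own labeling on the configuration with all $m-8$ pendents at the hub $v_1$: the pendent edges contribute $(m-8)(m-1)$, while $\psi(v_1v_2)=\psi(v_5v_6)=m-8$, $\psi(v_1v_3)=\psi(v_1v_4)=m-5$, $\psi(v_3v_2)=\psi(v_4v_2)=3$, and $\psi(v_1v_5)=\psi(v_6v_2)=m-4$, giving $Mo_e = m^2-3m-20$. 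For your terminal graph with all pendents at $v_3$, the same count gives $\psi(v_1v_3)=\psi(v_3v_2)=m-11$ (for $m\ge 11$), $\psi(v_1v_2)=\psi(v_5v_6)=0$, $\psi(v_1v_4)=\psi(v_4v_2)=m-5$, $\psi(v_1v_5)=\psi(v_6v_2)=m-4$, hence $Mo_e = m^2-3m-32$. So the full shift toward $v_3$ \emph{decreases} the index by $12$ for $m \ge 11$, and the hub configuration even exceeds the claimed target $m^2-3m-24$ for every $m\ge 9$. Your hedge ``under standing inequalities analogous to $a_4+a_5>a_1+a_2+8$'' excludes exactly this hub-heavy regime, and you never treat the complementary cases --- yet that is where the maximum of this family actually sits, so no reordering of shifts toward $v_3$ can close the gap.

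To be fair, the paper's own proof shares this defect: its standing assumptions $a_3 \geq a_2$ and $a_4+a_5+a_6>1$ silently discard the configurations with pendents concentrated at the degree-four vertices, and its claimed increment $2a_1+6a_2$ for the hub-to-$v_3$ move is valid only when the mass already at $v_3$ dominates (otherwise the sign patterns inside the absolute values flip); reproducing that bookkeeping would therefore not rescue your plan. A workable repair follows the pattern of Lemma \ref{36}, where \emph{two} terminal graphs ($H_1$ at a hub, $H_2$ at an internal vertex) are retained: in the hub-heavy regime shift pendents toward a hub instead of toward $v_3$, keep the all-at-hub graph as a second terminal configuration, and compare the two terminal values $m^2-3m-20$ and $m^2-3m-32$ by direct count. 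Since $m^2-3m-20 < m^2-m-36$ for $m \ge 9$, the corrected bound still suffices for the application in Lemma \ref{43}. Finally, note that you nowhere verify the stated value $Mo_e(H_4)=m^2-3m-24$; neither candidate terminal graph attains it, so the numerical target itself must be recomputed rather than quoted.
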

\begin{proof}
Suppose that $v_i$ $(i=1,...,6) $ be the six vertices in $\alpha_3 (1,2,2,3)$ of $G$ with $d_G(v_1)=d_G(v_2)=4$ and  $d_G(v_3)=d_G(v_4)=d_G(v_5)=d_G(v_6)=2$, as shown in Fig. \ref{alp2}.  Let $a_i$ be the number of pendent edges of $v_i$ $(i=1,...,6) $. Assume that $a_3 \geq  a_2$, and  $a_4+a_5+a_6 >1 $. Let $G_1$ be the graph obtained from $G$ by shifting $a_i$ ( $i \geq 4$) pendent edges from $v_i$ ( $i \geq 4$) to $v_3$. We get
\begin{eqnarray*}
 & & Mo_e(G_1 )-  Mo_e(G )=\\
  &  & (a_1+4-a_3- a_4-a_5+-a_6-1)-(a_1+ a_4+a_5+4-a_3-1)\\
                       &+ & (a_3+a_4+a_5+a_6+1-a_2-4)-(a_2+a_4+a_6+4-a_3-1)\\
                       & + & (a_1+a_3+a_4+a_5+a_6+4-1)-(a_1+a_3+a_5+4-a_4-1)\\
                       & + &(a_2+a_3+a_4+a_5+a_6+4-1)- (a_2+a_3+a_6+4-a_4-1)\\
                       & + & (a_1+a_2+a_3+a_4+a_5+a_6+5-1)-(a_1+a_2+a_2+a_4+5-a_5-a_6-1)\\
                       & + & (a_1+a_2+a_3+a_4+a_5+a_6+5-1)-(a_1+a_2+a_2+a_4+5-a_5-a_6-1)\\
                       &+ & (a_1+3-a_2-3)-(a_1+a_5+3-a_2-a_6-3)+(a_1+3-a_2-3)\\
                       & - & (a_1+a_5+3-a_2-a_6-3)\\
                       & = & 2(a_3+a_4+a_5)+6a_6-2a_2-12 > 0.
\end{eqnarray*}
For $a_1 +a_2 > 0$, let $G_2$ be the graph obtained from $G_1$ by shifting  $a_1$ (resp. $a_2$) pendent edges from $v_1$ (resp. $v_2$) to $v_3$. We have

\begin{eqnarray*}
 & & Mo_e(G_2 )-  Mo_e(G_1 ) =\\
 &  & (a_1+a_2+a_3+1-4)-(a_3+1-a_1-4)+(a_1+a_2+a_3+1-4)-(a_3+1-a_2-4)\\
                       &\qquad + & (a_1+a_2+a_3+4-1)-(a_1+a_3+4-1)+(a_1+a_2+a_3+4-1)-(a_2+a_3+4-1)\\
                       &\qquad + & (3-3)-(a_1+3-a_2-3)+ (3-3)-(a_1+3-a_2-3)\\
                       & = & 2a_1+6a_2> 0.
\end{eqnarray*}
Thus, $ Mo_e(G )< Mo_e(G_1 )< Mo_e(G_2 )$. Clearly, $G_2 \cong H_4$, and by simple calculation, we get $Mo_e(H_4) =  m^2-3m-248$.
\end{proof}

 \begin{figure}
\centering
\includegraphics[height=5cm, width=13cm]{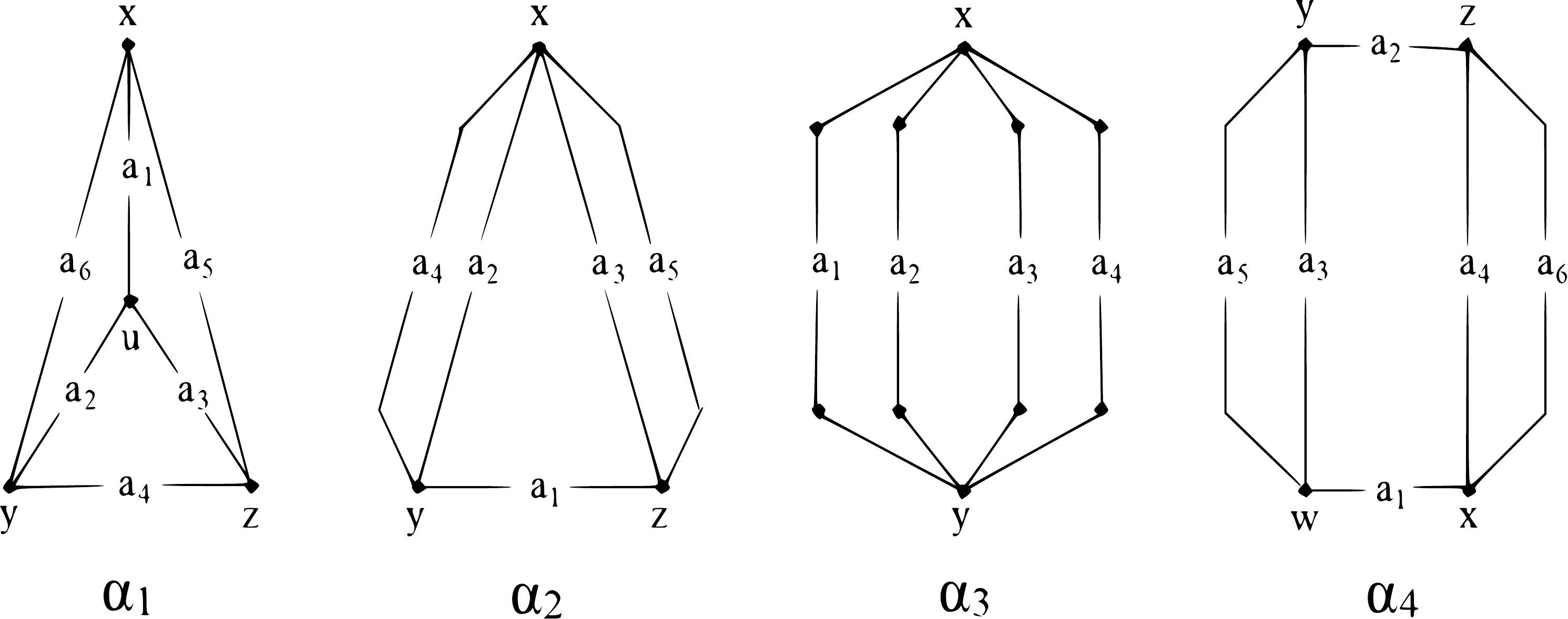}
\caption{ The Graphs for Lemmas \ref{41}, \ref{42}, \ref{43} and \ref{44}.}
\label{alp3}
\end{figure}

\begin{lemma}\label{43} Let $G \in \mathcal{G}_m^3$ of size $m$. Then
$Mo_e(G) < m^2-m-36$ for $m \geq 9$, and $Mo_e(G) \leq Mo_e(H_1)$ for $m \leq 9$.
\end{lemma}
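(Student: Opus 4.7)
The plan is to mimic the structure of the proof of Lemma \ref{42} for $\mathcal{G}_m^2$, parameterizing any graph in $\mathcal{G}_m^3$ by the path-length tuple $(a_1,a_2,a_3,a_4)$ of its brace $\alpha_3$ (whose two degree-$4$ vertices are connected by four internal paths) together with the pendent-edge distribution at each brace vertex. The strategy splits into two regimes: brace configurations already handled by Lemmas \ref{36}, \ref{37}, \ref{38}, and the remaining configurations to be dispatched by direct edge counting.

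First I would note that every element of $\mathcal{G}_m^3$ has brace $\alpha_3(a_1,a_2,a_3,a_4)$ with each $a_i \geq 1$ and at most one $a_i = 1$ (otherwise parallel edges would appear). The three small configurations $\alpha_3(1,2,2,2)$, $\alpha_3(2,2,2,2)$ and $\alpha_3(1,2,2,3)$ are already dispatched by Lemmas \ref{36}, \ref{37}, \ref{38}, giving $Mo_e(G) \leq \max\{Mo_e(H_1), Mo_e(H_2)\}$, $Mo_e(H_3)$ and $Mo_e(H_4)$ respectively. Since $Mo_e(H_2) = m^2 - 2m - 31$, $Mo_e(H_3) = m^2 - m - 48$, and $Mo_e(H_4) = m^2 - 3m - 24$, all three bounds are strictly smaller than $m^2 - m - 36$ whenever $m \geq 9$; for $m \leq 9$ those same lemmas already certify $Mo_e(G) \leq Mo_e(H_1)$, which is exactly what is required.

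For the remaining configurations, namely those $\alpha_3(a_1,a_2,a_3,a_4)$ with at least one $a_i \geq 3$ and not equal to $(1,2,2,3)$, I would apply the edge-counting recipe used in Lemma \ref{42}. For each long internal path of the brace, select a handful of edges $e$ for which one side of the cut that $e$ induces in $G$ contains only a few edges, giving $\psi_G(e) \leq m - c_e$ for a suitable constant $c_e$. Together with the trivial bound $\psi_G(e) \leq m-1$ on the remaining edges, this yields
\[
Mo_e(G) \leq \sum_{e \in S}(m - c_e) + (m - |S|)(m - 1),
\]
which one tunes, typically with $|S|$ around $8$--$10$ and $c_e$ between $5$ and $9$, to fall strictly below $m^2 - m - 36$. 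Subcases will distinguish whether the long paths all share a common hub, whether they lie on two different cycles of the brace, or whether two of them combine into one geodesic, just as in Lemma \ref{42}.

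The main obstacle will be enumerating the distinct brace configurations cleanly; the tight cases are the borderline ones such as $\alpha_3(2,2,2,3)$ and $\alpha_3(1,2,3,3)$, where having a few extra edges on long paths does not automatically give enough slack against $m^2 - m - 36$, and one may need an auxiliary shifting argument (analogous to those inside Lemmas \ref{36}--\ref{38}) to push the pendent edges onto one brace vertex and reduce to a configuration already bounded. Finally, the boundary range $m \leq 9$ must be checked by hand against $Mo_e(H_1) = m^2 - 4m - 9$, which is a short routine computation given the limited number of graphs involved.
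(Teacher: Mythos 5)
Your proposal is correct and follows essentially the same route as the paper: the paper's proof likewise cases on the length multiset $(a_1,a_2,a_3,a_4)$ of the brace $\alpha_3$, dispatches the minimal configurations $(1,2,2,2)$, $(2,2,2,2)$ and $(1,2,2,3)$ via Lemmas \ref{36}, \ref{37} and \ref{38} exactly as you do, and handles every configuration containing some $a_i \ge 3$ by your edge-selection count $Mo_e(G) \le \sum_{e\in S}(m-c_e) + (m-|S|)(m-1)$ with $|S|$ between $9$ and $12$, the range $m \le 9$ being settled against $Mo_e(H_1)=m^2-4m-9$ as you indicate. Two small remarks: the hub/different-cycles/combined-geodesic trichotomy you import from the $\alpha_1$ analysis (Lemma \ref{41}) is unnecessary here, since all four internal paths of $\alpha_3$ share both endpoints and the subcases reduce to the values of the $a_i$; and your instinct about the borderline configuration $(2,2,a_3,a_4)$ with $a_3\ge 3$ or $a_4\ge 3$ is well placed --- the paper's Case~2 records only the subcases $a_2\ge 3$ and $a_2=a_3=a_4=2$, so explicitly covering those intermediate configurations by counting or shifting, as you plan, makes your outline if anything more complete than the printed proof.
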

\begin{proof}
Suppose that $G \in \mathcal{G}_m^3$, then $G$ has a brace $\alpha_2 (a_1, a_2, a_3,  a_4)$ as shown in Fig. \ref{alp3}. Assume that $1 \leq a_1 \leq a_2 \leq a_3 \leq a_4$. We proceed with the following three possible cases.

\noindent {\bf Case 1.} $3 \leq a_1 \leq a_2 \leq a_3 \leq a_4$.

We choose twelve edges, eight edges in the paths $P(a_i)$ $(i=1,2,3,4)$ such that each one is incident to $x$ or $y$, four edges in the middle of $P(a_i)$ $(i=1,2,3,4)$. We deduce that
\[
Mo_e(G) \leq 8(m-8)+ 4(m-12)+(m-12)(m-1) < m^2-m-36.
\]

\noindent {\bf Case 2.} $  a_1 = 2$.

\noindent {\bf Subcase 2.1.} $3  \leq a_2 \leq a_3 \leq a_4$.

We choose eleven edges, eight edges in the paths $P(a_i)$ $(i=1,2,3,4)$ such that each one is incident to $x$ or $y$, three edges in the middle of $P(a_i)$ $(i=2,3,4)$. We deduce that
\[
Mo_e(G) \leq 6(m-7)+ 2(m-9)+3(m-11)+(m-11)(m-1) < m^2-m-36.
\]

\noindent {\bf Subcase 2.2.} $ a_2 = a_3 = a_4= 2$.

The Subcase follows from Lemma \ref{37}.

\noindent {\bf Case 3.} $  a_1 = 1$.

\noindent {\bf Subcase 3.1.} $3  \leq a_2 \leq a_3 \leq a_4$.

We choose ten edges, six edges in the paths $P(a_i)$ $(i=2,3,4)$ such that each one is incident to $x$ or $y$, three edges in the middle of $P(a_i)$ $(i=2,3,4)$, and one edge in $P(a_1)$ incident to $x$. It follows that
\[
Mo_e(G) \leq 6(m-4)+ 4(m-10)+(m-10)(m-1) < m^2-m-36.
\]

\noindent {\bf Subcase 3.2.} $a_2=2,  3  \leq a_3 \leq a_4$.

The proof is similar to the Subcase 3.1.

\noindent {\bf Subcase 3.3.} $a_2= a_3=2,  3  \leq a_4$.

If $a_4=3 $, then it follows from Lemma \ref{38}. If $a_4 \geq 4$, then we choose nine edges, four edges in the path $P(a_4)$ such that two are incident to $x$ or $y$ and the other two are in the middle of  $P(a_4)$, two edges in the path $P(a_3)$ (resp. $P(a_2)$) such that one is incident to $x$ and the other is in the middle of $P(a_3)$ (resp. $P(a_2)$) and one edge in $P(a_1)$ incident to $x$.  We have
\[
Mo_e(G) \leq 2(m-5)+2(m-7)+ 4(m-6)+(m-9)+(m-9)(m-1) < m^2-m-36.
\]

\noindent {\bf Subcase 3.4.} $a_2= a_3=a_4=2$.

By Lemma \ref{36}, $Mo_e(G) < m^2-m-36$ for $m \geq 9$, and $Mo_e(G) \leq Mo_e(H_1)$ for $m \leq 9$.
\end{proof}

\begin{lemma}\label{44} Let $G \in \mathcal{G}_m^4$ of size $m$. Then $Mo_e(G) < m^2-m-36$.
\end{lemma}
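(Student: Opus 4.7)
The plan is to mirror the strategy of Lemmas \ref{41}, \ref{42}, and \ref{43}, adapting it to the brace $\alpha_4$. I would parameterize $G$ by $\alpha_4(a_1, a_2, \dots)$, where $a_i$ records the number of edges in the $i$-th internal path of $\alpha_4$, and, for each edge $e = uv$ of $G$, bound $\psi_G(e) \le m - c_e$, where $c_e$ counts the edges of $G$ whose distances to $u$ and $v$ are known to be equal (so they cannot contribute to $|m_u(e|G) - m_v(e|G)|$). For edges incident to a branch vertex of $\alpha_4$, the constant $c_e$ will already be substantial, because the dense structure of $\alpha_4$ automatically forces several edges to be equidistant from the two endpoints of $e$.

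The analysis splits on how many internal paths of $\alpha_4$ have length at least $2$. In the regime where at least two paths are long, I would select eight or nine strategic edges --- some incident to the branch vertices and some in the middles of the long paths --- bound each chosen $\psi(e)$ by $m - c$ for an explicit $c \ge 5$, and bound the remaining $m - k$ edges trivially by $m - 1$. Adding up yields $Mo_e(G) \le q(m) + (m-k)(m-1)$ for a lower-order polynomial $q(m)$, from which $Mo_e(G) < m^2 - m - 36$ follows by elementary arithmetic. Subcases inside this regime correspond to whether the long paths enclose a common cycle, compose a longer path through distinct cycles, or share a branch vertex, paralleling Subcases 1.1--1.3 of Lemma \ref{41}.

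When at most one path of $\alpha_4$ has length at least $2$, the graph is essentially $\alpha_4$ with pendent stars attached to its branch vertices, and the crude edge-counting bound is no longer sharp. Here I would employ edge-shifting transformations in the style of Lemmas \ref{32}, \ref{33}, and \ref{35}: push pendent edges from a branch vertex of smaller weighted contribution onto one of larger contribution, verify by direct $\psi$-computation on every edge of $\alpha_4$ that this strictly increases $Mo_e$, and thereby reduce $G$ to a small family of canonical graphs in which all pendents are concentrated on a single branch vertex. For each canonical graph the value of $Mo_e$ can then be computed explicitly and shown to be strictly less than $m^2 - m - 36$.

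The main obstacle will be the shifting regime, where writing out $\psi$ for every edge of $\alpha_4$ before and after a pendent transfer is notationally heavy because $\alpha_4$ has a richer branching structure than $\alpha_1, \alpha_2, \alpha_3$. Once the correct monovariants for each kind of shift are isolated, however, the reduction to the canonical family becomes mechanical and the final inequality reduces to a direct polynomial comparison.
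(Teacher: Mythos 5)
Your first regime --- selecting a handful of strategic edges, bounding each $\psi(e)$ by $m-c$, and bounding the rest by $m-1$ --- is in spirit the paper's entire proof, except that the paper needs no regimes at all: it disposes of $\mathcal{G}_m^4$ in three lines with a \emph{single, uniform} selection of eight edges valid for every path-length profile and every placement of pendent trees, namely two end-edges of $P(a_5)$ (incident to $w$ or $y$), two end-edges of $P(a_6)$ (incident to $z$ or $x$), and the four edges $yz, yw, wx, zx$ joining the branch vertices, giving
\[
Mo_e(G) \leq 4(m-5)+4(m-8)+(m-8)(m-1) = m^2-m-44 < m^2-m-36.
\]
Your case split, and in particular the entire shifting regime of your third paragraph, rests on a false premise: for $\alpha_4$ the crude counting bound does \emph{not} deteriorate when at most one internal path is long and pendents concentrate on a branch vertex. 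The reason is structural. Writing $\psi(uv) \leq m-1-2\min\{m_u,m_v\}-(\#\text{equidistant edges})$, each edge between two branch vertices of $\alpha_4$, say $yw$, always has at least three brace edges strictly on its minority side (e.g.\ $wx$, the end-edge of $P(a_5)$ at $w$, and the end-edge of $P(a_6)$ at $x$) and at least one brace edge equidistant from its endpoints (namely $zx$), regardless of where pendent edges attach; hence $\psi \leq m-8$ unconditionally, and likewise $\psi \leq m-5$ for the four path-end edges, since two brace edges incident to the branch endpoint lie on the minority side. Because the resulting bound $m^2-m-44$ sits a full $8$ below the target, and no graph with brace $\alpha_4$ occurs among the extremal graphs of Theorem \ref{a}, there is no sharpness to chase and no canonical family to reduce to. Your shifting plan could in principle be carried through (any terminal configuration would still satisfy the strict inequality), but it is precisely the component you flag as the main obstacle, it is left unexecuted in the proposal, and it imports the difficulty profile of $\alpha_1, \alpha_2, \alpha_3$ --- where the extremal graphs genuinely live and shifting is indispensable for identifying them --- into the one brace class where it is superfluous.
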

\begin{proof}
Suppose that $G \in \mathcal{G}_m^4$, then $G$ has a brace $\alpha_4 (a_1, a_2, a_3,  a_4, a_5, a_6)$ as shown in Fig. \ref{alp3}. We choose eight edges, two edges in the path $P(a_5)$ such that each  is incident to $w$ or $y$, two edges in the path $P(a_6)$ such that each  is incident to $z$ or $x$, the four edges $yz, yw, wx, zx$.  We obtain
\[
Mo_e(G) \leq 4(m-5)+4(m-8)+ (m-8)(m-1) < m^2-m-36.
\]
\end{proof}

The proof of the Theorem \ref{a} follows from Lemmas \ref{31}, \ref{41}, \ref{42}, \ref{43} and \ref{44}.

\vspace{10mm}

\noindent {\bf Acknowledgement:} This work was supported  by the National Natural Science Foundation of China (Grant Nos. 12071194, 11571155 and 12071158).

\vspace{3mm}

\end{document}